\newtheorem{theorem}{Theorem}[section]
\newtheorem{lemma}[theorem]{Lemma}
\newtheorem{definition}[theorem]{Definition}
\newtheorem{fact}[theorem]{Fact}
\newtheorem{question}{Question}
\newcommand{\forceP}{\mathbb{P}}
\newcommand{\forceQ}{\mathbb{Q}}
\newcommand{\forceR}{\mathbb{R}}
\newcommand{\forceS}{\mathbb{S}}
\newcommand{\NS}{\hbox{NS}_{\omega_1}}
\newcommand{\ZFC}{\mathsf{ZFC}}
\newcommand{\ZFP}{\mathsf{ZF}^-}
\newcommand{\PFA}{\mathsf{PFA}}
\newcommand{\BPFA}{\mathsf{BPFA}}
\newcommand{\MM}{\mathsf{MM}}
\newcommand{\MMpp}{\mathsf{MM^{++}}}
\newcommand{\BMM}{\mathsf{BMM}}
\newcommand{\MA}{\mathsf{MA_{\omega_1}}}
\newcommand{\Lim}{\mathop{\textrm{Lim}}}
\newcommand{\Cof}{\textrm{Cof}}
\newcommand{\mbp}{\mathbb{P}}
\def\undertilde#1{\mathord{\vtop{\ialign{##\crcr
$\hfil\displaystyle{#1}\hfil$\crcr\noalign{\kern1.5pt\nointerlineskip}
$\hfil\tilde{}\hfil$\crcr\noalign{\kern1.5pt}}}}}
\title{Forcing Axioms and Definabilty of the Nonstationary Ideal on $\omega_1$}
\author{ Stefan Hoffelner\footnote{The first and the third author were supported by funded by the Deutsche Forschungsgemeinschaft (DFG German Research Foundation) under Germanys Excellence Strategy EXC 2044 390685587, Mathematics M\"unster: Dynamics-Geometry-Structure.},  Paul Larson\footnote{The second author was supported in part by NSF research grants DMS-1201494 and DMS-1764320.}, Ralf Schindler and Liuzhen Wu}
\date{5.6.2022}
\begin{document}

\maketitle

\begin{abstract}
We show that under $\BMM$ and ``there exists a Woodin cardinal$"$, the nonstationary ideal on $\omega_1$ can not be defined by a $\Sigma_1$ formula with parameter $A \subset \omega_1$. We show that the same conclusion holds under the assumption of Woodin's $(\ast)$-axiom.
We further show that there are universes where $\BPFA$ holds and $\NS$ is $\Sigma_1(\omega_1)$-definable. Last we show that if the canonical inner model with one Woodin cardinal $M_1$ exists, there is a universe where $\NS$ is saturated, $\Sigma_1(\omega_1)$-definable and $\MA$ holds.
\end{abstract}

\section{Introduction}
This article deals with the possibility of a (boldface) $\bf{\Delta_1}$-definition (over $H(\omega_2)$) of the nonstationary ideal on $\omega_1$ in the presence of various forcing axioms. As we shall see, stronger assumptions rule out the existence of such $\bf{\Sigma}_1$-definitions, whereas weaker assumptions are consistent with such $\bf{\Sigma}_1$-definitions, even in the presence of $\NS$ being saturated.

The main results are as follows.

\begin{theorem}
Assume $\BMM^{}$ and that there exists a Woodin cardinal. Then for no $\Sigma_1$-formula $\varphi(v_0,v_1)$ and no parameter $A \subset \omega_1$ does it hold that
\[ \forall S \in P(\omega_1) (S \text{ is stationary } \Leftrightarrow \varphi(S, A)). \]
\end{theorem}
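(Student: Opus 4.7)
The plan is to argue by contradiction. Suppose $\varphi(v_0, v_1)$ is a $\Sigma_1$-formula and $A \subseteq \omega_1$ is a parameter such that, for every $S \subseteq \omega_1$, the equivalence "$S$ stationary $\Leftrightarrow \varphi(S, A)$" holds in $V$. Since non-stationarity is already $\Sigma_1$ in $S$ over $H(\omega_2)$ (witnessed by any club disjoint from $S$), this assumption renders $\NS$ a $\Delta_1$-definable predicate on $H(\omega_2)$ with parameter $A$.

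The strategy is to exploit the $\Sigma_1$-reflection content of $\BMM$. Consider the $\Sigma_1$-formula over $H(\omega_2)$ with parameter $A$:
\[ \psi(A) \ \equiv\ \exists S \subseteq \omega_1\, \exists C \subseteq \omega_1\, \bigl(C \text{ is club in } \omega_1\,\wedge\, C \cap S = \emptyset\,\wedge\, \varphi(S, A)\bigr). \]
If one can arrange a stationary-set-preserving forcing $\mathbb{P} \in V$ and a $\mathbb{P}$-generic $G$ with $V[G] \models \psi(A)$, then by $\BMM$ the formula $\psi(A)$ reflects to $V$; there it produces some $S \in V$ which, by the assumed definability via $\varphi$, is stationary yet admits a disjoint club, a contradiction. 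Note that the witness $S$ in $V[G]$ must be a new subset of $\omega_1$ not lying in $V$: stationary-set preservation forbids killing the stationarity of any ground-model set, while any ground-model $S$ witnessing $\varphi(S, A)$ remains stationary in $V[G]$ by upward absoluteness of $\varphi$ combined with the hypothesis.

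The Woodin cardinal is invoked to construct $\mathbb{P}$ and $S$. I would first perform a semiproper (hence stationary-set-preserving) Shelah-type iteration producing an intermediate $V[G]$ in which $\NS$ is saturated, and then work further inside a generic ultrapower $j\colon V[G] \to M$ obtained from a $V[G]$-generic $\NS$-filter, using $j$ to manufacture a new subset $S \subseteq \omega_1$ in an outer extension equipped with both a fresh $\Sigma_1$-witness for $\varphi(S, A)$ and a disjoint club. The principal obstacle is precisely arranging the coexistence, in one stationary-set-preserving extension over $V$, of the $\Sigma_1$-witness for $\varphi(S, A)$ and the club disjoint from $S$ for a common new $S$: the upward absoluteness of $\varphi$ preserves old witnesses but supplies no new ones on its own, so the construction must exploit the combinatorial flexibility granted by the Woodin cardinal (whether via $\mathbb{P}_{\max}$-style transfer arguments or stationary-tower analyses) to produce an $S$ that is "stationary-like" from $\varphi$'s viewpoint yet is genuinely non-stationary in the extension.
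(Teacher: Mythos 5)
Your reflection skeleton is exactly the paper's: define the $\Sigma_1$ statement ``there is a nonstationary $S$ with $\varphi(S,A)$'', realize it in a stationary-set-preserving extension, and pull it back to $V$ with $\BMM$ to contradict the hypothesis. But the entire mathematical content of the theorem lies in the step you explicitly defer --- producing, in one stationary-set-preserving extension, a single set $S$ that carries a $\Sigma_1$-witness for $\varphi(S,A)$ and is simultaneously covered by a club disjoint from it --- and your sketch of that step both omits the key ingredient and points in a direction that does not work as stated.

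The paper's construction is: force with the stationary tower $\forceP_{<\delta}$ below the Woodin cardinal $\delta$, choosing the condition $S_0=\{X\prec H(\omega_2): |X|=\aleph_1,\ X \text{ transitive}\}$ so that the generic embedding $j:V\to M$ has critical point $\omega_2^V$ and $P(\omega_1)\cap V$ becomes a set of size $\aleph_1$ inside $M$. Then Taylor's theorem (under $\MA$, which follows from $\BMM$, the ideal $\NS$ is nowhere $\omega_1$-dense), applied in $M$ to the $\aleph_1$-sized family $P(\omega_1)\cap V$, yields a set $D$ stationary in $M$ with $T\setminus D$ stationary for every stationary $T\in P(\omega_1)\cap V$. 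Elementarity of $j$ (which fixes $A$ and every subset of $\omega_1$, precisely because $\mathrm{crit}(j)=\omega_2^V$) gives $M\models\varphi(D,A)$, hence $V[g]\models\varphi(D,A)$ by upward absoluteness; and the condition on $T\setminus D$ is exactly what makes the club-shooting through $\omega_1\setminus D$ preserve all ground-model stationary sets. Your proposed alternative --- first force $\NS$ saturated and then take a generic ultrapower from a $V[G]$-generic filter on $P(\omega_1)/\NS$ --- runs into the problem that such an embedding has critical point $\omega_1$, so it moves the parameter $A$ and every candidate $S\subseteq\omega_1$ to objects of size $\aleph_2$; the elementarity transfer $M\models\varphi(D,A)$ for a $D\subseteq\omega_1^V$ is then unavailable, and nothing in your sketch replaces it. You also never identify why a suitable $D$ (one whose complement can be clubbed without killing old stationary sets) should exist at all; that is Taylor's theorem, which your proposal does not invoke. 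Finally, your parenthetical argument that the witness $S$ must be new is not quite right: a ground-model nonstationary set could in principle acquire a $\varphi$-witness in the extension (this would be harmless, indeed it would finish the proof immediately), whereas the real issue is manufacturing any nonstationary witness at all.
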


\begin{theorem}
Assume Woodin's axiom $(\ast)$, then there is no $A \subset \omega_1$ and no $\Sigma_1$-formula $\varphi(-,A)$ in the language of set theory such that \[\forall S \in P(\omega_1) \, (S \text{ is stationary } \Leftrightarrow \varphi(S,A)). \] 
\end{theorem}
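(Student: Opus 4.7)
\emph{Proof plan.} Suppose toward a contradiction that $\varphi(v_0,v_1)$ is a $\Sigma_1$-formula and $A\subseteq\omega_1$ such that $\forall S\in P(\omega_1)\,(S\text{ is stationary}\iff\varphi(S,A))$. Non-stationarity is always $\Sigma_1(\omega_1)$-definable via the existence of a club disjoint from $S$, so the hypothesis yields $\mathbf{\Delta}_1$-definability of $\NS$ over $H(\omega_2)$ in parameters $A,\omega_1$; in particular the biconditional $\psi(A):=\forall S\,(S\text{ stationary}\iff\varphi(S,A))$ is a $\Pi_2$-sentence over $H(\omega_2)$ in parameter $A$ which is true in $V$.

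Next invoke Woodin's $\mathbb{P}_{\max}$-analysis of $(\ast)$: $L(\mathbb{R})\models\AD$ and $L(P(\omega_1))^V = L(\mathbb{R})[G]$ for some $G$ that is $\mathbb{P}_{\max}$-generic over $L(\mathbb{R})$, and every $X\in H(\omega_2)^V$ is captured by a condition $p\in G$, in the sense that $X=j_p(x)$ for some $x\in M_p$, where $j_p\colon M_p\to M_p^*\subseteq V$ is the iteration map determined by $G$. Fix such $p\in G$ capturing $A$, say $A=j_p(a_p)$. By the standard analysis of $(\ast)$, the $\Pi_2$-sentence $\psi(A)$ is forced below $p$ in the $\mathbb{P}_{\max}$-sense, so that every condition $q\le p$ capturing $A$ produces an iterate $M_q^*$ satisfying $\psi(A)$.

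The key technical step is to produce, by density arguments below $p$, two incompatible extensions $q_0,q_1\le p$ and a common set $\dot S\subseteq\omega_1$ such that both $q_i$ still capture $A$ but $\dot S$ is stationary in $M_{q_0}^*$ while non-stationary in $M_{q_1}^*$. This uses the iteration-theoretic machinery underlying $\mathbb{P}_{\max}$: starting from a stationary/co-stationary partition of $\omega_1^{M_p}$ inside $M_p$, one extends $p$ in two distinct ways---absorbing $\dot S$ into the nonstationary ideal along one branch while preserving its stationarity along the other---arranging that $a_p$'s image remains $A$ in both iterates. This construction is the main obstacle: preserving the capture of $A$ while freely toggling the stationarity of $\dot S$ demands delicate control over the iteration steps.

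With $q_0,q_1$ built, the contradiction is immediate. From the $q_0$-side, $M_{q_0}^*\models\psi(A)$ and $\dot S$ is stationary in $M_{q_0}^*$, hence $M_{q_0}^*\models\varphi(\dot S,A)$; since $M_{q_0}^*\subseteq V$ is transitive and contains $\dot S, A$, $\Sigma_1$-upward absoluteness yields $V\models\varphi(\dot S,A)$, so $\dot S$ is stationary in $V$. From the $q_1$-side, $M_{q_1}^*$ contains a set $C$ which is closed unbounded in $\omega_1^{M_{q_1}^*}=\omega_1^V$ and disjoint from $\dot S$; by absoluteness of this property, $C$ is a genuine $V$-club witnessing that $\dot S$ is non-stationary in $V$, the desired contradiction.
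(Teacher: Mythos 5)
Your overall architecture is right---derive a contradiction by producing a condition below $p$ under which some set satisfying $\varphi$ has been made nonstationary, then play $\Sigma_1$-upward absoluteness of $\varphi$ against the upward absoluteness of clubs; your final paragraph is essentially the paper's closing move. But there is a genuine gap exactly at what you yourself label ``the main obstacle'': you never explain how to extend $p$ so as to toggle the stationarity of a set while remaining below $p$ in the $\forceP_{\text{max}}$-order, and this is not a routine density argument. The relation $q<_{\forceP_{\text{max}}}p$ requires the iteration $i\colon p\to p^{\ast}$ to satisfy $J^{p^{\ast}}=I^{q}\cap p^{\ast}$, so any set thrown into the ideal must leave every $J$-positive set of $p^{\ast}$ positive; a priori no such set need exist (for instance if the ideal were $\omega_1$-dense over $p^{\ast}$). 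The paper's key input, absent from your plan, is Taylor's theorem: since $\forceP_{\text{max}}$-conditions satisfy $\MA$, their normal ideals are not $\omega_1$-dense, so there is a positive $T$ with $S\setminus T$ positive for every positive $S\in p^{\ast}$. Iterating into $V$ makes $j(T)$ a genuinely stationary set, hence $V\models\varphi(j(T),A)$; one then passes to the normal ideal $L$ generated by $\NS^{V}\cup\{j(T)\}$, checks $j(J)=L\cap j(p^{\ast})$, and collapses a countable elementary submodel of $(H(\omega_2)^{V},L,A)$ to obtain a legitimate condition $r<p$ forcing $\exists T\in\bar L\,\varphi(T,a)$. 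Generically iterating $r$ then yields a nonstationary $k(T)$ with $V\models\varphi(k(T),A)$, contradicting $r<p$. Without Taylor's theorem (or a substitute) your $q_{0},q_{1}$ simply cannot be built.

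A secondary inaccuracy: you assert that every $q\le p$ capturing $A$ ``produces an iterate $M_{q}^{\ast}$ satisfying $\psi(A)$.'' This is false as stated: $\psi(A)$ quantifies over all of $P(\omega_1)^{V}$, and an $\aleph_1$-sized iterate need not contain $\Sigma_1$-witnesses for $\varphi$ even when $V$ does; what the $\forceP_{\text{max}}$ analysis gives (for $q$ in the generic filter) is only $j(I^{q})=\NS^{V}\cap M_{q}^{\ast}$. The direction you actually need, $M_{q_0}^{\ast}\models\varphi(\dot S,A)$, must be arranged by elementarity of the iteration map from a condition already satisfying $\varphi$ of the relevant preimage---which is precisely why the paper builds its new condition as the collapse of an elementary submodel of $(H(\omega_2)^{V},L,A)$. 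You also gloss over the reduction (via L\"ucke--Schindler--Schlicht) to the case $A\notin L(\mathbb{R})$, which is what makes the iteration sending the distinguished set to $A$ canonical and identifies the generic filter with $g_{A}$.
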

In contrast to these two impossibility results we also obtain two theorems which show that under weaker assumptions, $\Sigma_1$ definitions of $\NS$ are possible.

\begin{theorem}
There is a universe in which $\BPFA$ holds and $\NS$ is $\Sigma_1(\omega_1)$-definable.
\end{theorem}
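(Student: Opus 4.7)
The plan is to force over a suitable ground model, interleaving a Goldstern--Shelah style iteration realizing $\BPFA$ with coding forcings that render stationarity $\Sigma_1(\omega_1)$-definable. I would start from $L$ equipped with a reflecting cardinal $\kappa$ (which suffices for $\BPFA$), and fix in $L$ a canonical partition $\vec T = \langle T_\alpha : \alpha < \omega_1\rangle$ of $\omega_1$ into stationary sets, $\Sigma_1(\omega_1)$-definable through the $L$-hierarchy. Over this setup I carry out a countable support iteration $\langle \forceP_\alpha, \dot{\forceQ}_\alpha : \alpha < \kappa\rangle$ of proper forcings, driven by a bookkeeping function $F : \kappa \to H(\kappa)$ that alternates between (a) $\BPFA$-steps that force with a proper poset $Q$ of size $< \kappa$ picked up by $F$, and (b) coding steps that, for a stationary set $S$ picked up by $F$, add a subset $W_S \subseteq \omega_1$ from which $S$ is recovered using $\vec T$ and $\omega_1$.

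The coding step is the heart of the construction. To each stationary $S$ being coded I associate a witness $W_S \subseteq \omega_1$ that records $S$ through the $\vec T$-pattern via some reshaping or almost-disjoint style scheme; the precise recipe is that $\alpha \in S$ iff $W_S$ satisfies some $\Delta_0$-condition relative to $T_\alpha$ and $\omega_1$. The coding forcing adding $W_S$ is shown to be proper and to preserve all stationary subsets of $\omega_1$ (hence in particular every $T_\alpha$), by standard reshaping arguments and preservation of $\NS$-positivity along the iteration. In the final model $V[G]$ the intended $\Sigma_1(\omega_1)$-formula reads
\[ \varphi(S,\omega_1) \equiv \exists W,\vec T \in H(\omega_2)\bigl(\vec T \text{ is the canonical } L\text{-partition and } W \text{ decodes to } S \text{ via } \vec T\bigr), \]
which is $\Sigma_1$ because both the recognition of $\vec T$ via the $L$-hierarchy and the decoding clause are $\Delta_0$ over $H(\omega_2)$. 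The $\BPFA$-realizing portion of the iteration is a standard reflecting cardinal argument, so both conclusions are attained in $V[G]$.

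The hard part will be ensuring that the coding is robust through the remainder of the iteration, and two failure modes must be ruled out. First, a later proper forcing might add a $W$ that decodes to some $T \subseteq \omega_1$ which turns out to be nonstationary in $V[G]$, producing a spurious $\Sigma_1$-witness; this is managed by making the coding scheme rigid enough that any $W$ satisfying the decoding clause must have been introduced by a coding step at some stage $\alpha < \kappa$, combined with stationarity preservation to guarantee that the set so coded remains stationary in $V[G]$. Second, a later forcing might destroy the decoding property of some $W_{S_0}$ for a genuinely stationary $S_0$; this requires the decoding property to be absolute between the intermediate extension $V[G_\alpha]$ and $V[G]$, which one arranges by placing $W_{S_0}$ into $H(\omega_2)$ at the moment it is added and verifying that preservation of the $T_\alpha$'s is enough to keep the $\Delta_0$-decoding clause stable. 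Putting these ingredients together yields a $V[G]$ satisfying $\BPFA$ in which $\NS$ is $\Sigma_1(\omega_1)$-definable.
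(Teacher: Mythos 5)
Your high-level architecture (ground model $L$ with a reflecting cardinal, a countable support iteration interleaving $\BPFA$-steps with coding steps, and the two failure modes you flag at the end) matches the shape of the paper's argument, but the actual coding mechanism --- which is the entire mathematical content of the theorem --- is missing, and the specific devices you gesture at do not work. You propose to code a stationary $S\subseteq\omega_1$ by a witness $W_S\subseteq\omega_1$ read off against a partition $\vec T=\langle T_\alpha:\alpha<\omega_1\rangle$ of $\omega_1$ into stationary sets, via ``some reshaping or almost-disjoint style scheme'', and you assert that the coding forcing is proper and preserves all stationary subsets of $\omega_1$. This is where the proposal breaks. If the pattern on $\vec T$ is recorded by destroying the stationarity of selected $T_\alpha$'s (the standard way to leave a $\Delta_0$-readable mark on a stationary subset of $\omega_1$ that survives into $H(\omega_2)$), the coding forcing is club shooting through a co-stationary set, which is not proper and does not preserve all stationary sets --- contradicting both of your preservation claims and undermining the properness of the iteration needed for $\BPFA$. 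If instead you reshape and almost-disjointly code into a real, the reshaping forcing is again not proper in general, and you still owe an argument that the resulting real is recognizable by a $\Sigma_1(\omega_1)$ formula. Most importantly, your answer to the first failure mode --- ``make the coding scheme rigid enough that any $W$ satisfying the decoding clause must have been introduced by a coding step'' --- is precisely the hard problem restated, not a solution: the $\BPFA$-steps force with arbitrary proper posets, and nothing in a scheme that certifies stationary sets one at a time prevents such a step (or $\BPFA$-reflection itself) from producing a spurious $W$ that decodes to a nonstationary set.

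The paper avoids all of this by moving the coding off $\omega_1$ entirely and by certifying much more than one set at a time. It fixes a mutually stationary sequence $\vec S=\langle S_\eta\rangle$ of stationary, co-stationary subsets of $\eta\cap\Cof(\omega)$ for regular $L$-cardinals $\eta$ above $\omega_1$, and codes by shooting $\omega$-clubs through chosen $S_\eta$'s. Such forcings are not proper, but they lie in a bespoke class ($\vec S$-preserving forcings) that is closed under countable support iteration and leaves stationary subsets of $\omega_1$ untouched, so the $\BPFA$-steps and the coding steps coexist. Crucially, at an inaccessible stage $\alpha$ the forcing writes a code for the \emph{entire} power set $P(\alpha)^{L[G_\alpha]}$, not for a single stationary set; the $\Sigma_1$ definition then says ``there exist a full $\vec S$-code $\vec C$ for some $P(\alpha)$ and a transitive $\ZFP$-model containing $\vec C$ which thinks $S$ is stationary.'' Any model containing a genuine full code contains all clubs of $L[G_\alpha]$ and so cannot certify a set that is nonstationary there, and the tail of the iteration preserves stationarity from stage $\alpha$ onward; this is what rules out spurious witnesses. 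A further point you elide is that recognizing the relevant $L$-cardinals in a $\Sigma_1$ way over $H(\omega_2)$ is itself nontrivial and in the paper uses a $\BPFA$-specific trick of Todorcevic involving trees derived from the global square sequence. Without an analogue of the ``code all of $P(\alpha)$'' idea and a preservation class compatible with it, your outline cannot be completed as written.
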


\begin{theorem}
Assume that the canonical inner model with one Woodin cardinal $M_1$ exists. Then there is a generic extension of $M_1$ where
$\NS$ is saturated and $\Sigma_1(\omega_1)$-definable and $\MA$ holds.
\end{theorem}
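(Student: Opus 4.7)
The plan is to carry out a two-stage forcing construction over $M_1$. In the first stage, one exploits the Woodin cardinal $\delta$ of $M_1$ to force $\NS$ to become saturated while collapsing $\delta$ to $\omega_2$; a Shelah-style forcing from a Woodin cardinal does this job, producing an intermediate model $V_1$ in which $\NS$ is saturated. In the second stage, one runs over $V_1$ an iteration of length $\omega_2$ that bookkeeps all small ccc partial orders so as to secure $\MA$, while interleaving coding forcings that install, for each stationary $S\subseteq\omega_1$ appearing in the bookkeeping, a $\Sigma_1$-recognisable witness to the stationarity of $S$.

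The coding leverages the fine-structural apparatus inherited from $M_1$. For each target stationary $S$ one first reshapes (so that every proper initial segment of $\omega_1$ becomes coded into a real relative to a suitable parameter), then performs an almost-disjoint coding of $S$ into the continuum pattern, using a canonical almost-disjoint family definable from the $M_1$-core. The intended $\Sigma_1(\omega_1)$-formula asserts the existence of a countable $M_1$-like premouse $\mathcal{N}$ containing $\omega_1$ together with decoded almost-disjoint information identifying $S$ as belonging to $\mathcal{N}$'s coded sequence of stationary sets. Non-stationary sets will not admit such a witness, since any correct premouse $\mathcal{N}$ recovers the club disjoint from such an $S$ already present in $V_1$, so only stationary sets can be coded.

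The main obstacle is preserving the saturation of $\NS$ throughout the entire construction. The coding stages are not ccc and act directly on $P(\omega_1)$, so their interaction with the generic quotient $P(\omega_1)/\NS$ must be controlled, while the ccc stages for $\MA$ must simultaneously preserve both the coding invariants and the saturation. I expect the heart of the proof to be a careful preservation argument, possibly factoring the combined iteration so that the $V_1$-generic witnessing saturation extends canonically to one in the final model, together with showing that each coding step is suitably ``saturation-preserving'' in a Shelah-style sense and that this property is closed under the iteration bookkeeping used. Once this preservation is in place, $\MA$ in the extension follows from the usual ccc bookkeeping argument, and the $\Sigma_1$-definability of stationarity is immediate from the design of the coding, since a stationary set $S$ becomes coded precisely at the stage at which the bookkeeping handles it, while non-stationary sets remain uncoded by the above correctness argument.
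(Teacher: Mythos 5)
Your overall architecture (a Shelah-style forcing from the Woodin cardinal to saturate $\NS$ while collapsing $\delta$ to $\omega_2$, then an $\omega_2$-length iteration interleaving $\MA$ bookkeeping with codings of stationary sets, with the fine structure of $M_1$ supplying the single parameter $\omega_1$) matches the shape of the paper's construction, but there is a genuine gap at exactly the point you yourself flag as ``the heart of the proof.'' You propose to code each stationary $S$ by reshaping followed by almost-disjoint coding, performed \emph{after} saturation has been arranged, and you defer the preservation of saturation under these non-ccc coding steps to an unspecified ``Shelah-style'' preservation argument. No such argument is supplied, and none is available in this form: once $\delta$ has been collapsed to $\omega_2$ there is no Woodin cardinal left with which to seal maximal antichains of $P(\omega_1)/\NS$, and reshaping and almost-disjoint coding forcings acting on $P(\omega_1)$ can, and in general do, destroy saturation. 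This is not a technical loose end to be discharged later; it is the central obstruction, and the proposal contains no idea for overcoming it.

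The paper circumvents the problem by a different division of labor. All non-ccc work --- the Shelah iteration for saturation, the Caicedo--Velickovic coding, and the Harrington-style almost-disjoint coding that makes an $\omega_2$-sequence of independent Suslin trees $\Sigma_1$-definable --- is folded into the initial iterations producing a model $W_1$ in which the saturation of $\NS$ is moreover \emph{ccc-indestructible}. Every subsequent step is ccc: Tennenbaum's forcing adds a definable independent sequence of Suslin trees, and stationary sets are then coded by writing their characteristic functions into $\omega_1$-blocks of these trees (shooting a branch versus specializing), interleaved with the ccc posets needed for $\MA$ via a diagonal bookkeeping that prevents the $\MA$ forcings from writing accidental patterns. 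Saturation is then preserved for free, and the $\Sigma_1$ definition quantifies over $\aleph_1$-sized transitive models seeing full patterns on the tree blocks. To salvage your outline you would have to either move the reshaping/almost-disjoint coding in front of (or into) the saturation iteration and make saturation indestructible under everything forced afterwards, or replace your coding device by one that is ccc over the saturated model; as written, the proposal does not prove the theorem.
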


The paper is organized as follows. We will prove the theorems in the order stated above, thus we start with the two impossibility results, then follow up with the two possibility results. The methods and techniques which are used in this article are quite varied and we will provide only very little preliminary definitions, instead assuming the reader knows the basics of the stationary tower forcing (see \cite{La2} for an extensive account) and $\forceP_{\text{max}}$ (see \cite{La1} or \cite{Wo}), as well as the coding technique of A. Caicedo and B. Velickovic (\cite{CV}).

\section{Impossibility Results}
This section collects two results which show that strong assumptions entail the impossibility of a boldface $\Sigma_1$-definition of $\NS$. We assume that the reader is familiar with Woodin's stationary tower and with $\forceP_{\text{max}}$-forcing.
\subsection{Impossibility under $\BMM^{} +$``There exists a Woodin Cardinal$"$}
The goal of this section is to prove that under $\BMM^{} +$``there exists a Woodin cardinal$"$, no $\Sigma_1$ formula (boldface) can define stationary subsets of $\omega_1$ correctly.

\begin{theorem}
Assume $\BMM$ and the existence of a Woodin cardinal $\delta$. Then for no parameter $A \subset \omega_1$ and for no $\Sigma_1$-formula $\varphi(-,A)$ in the language of set theory,  does $\varphi$ define the stationary subsets of $\omega_1$ i.e, we do not have that \[ \forall T \in P(\omega_1) (T \text{ is stationary } \Leftrightarrow \varphi(T,A) ).\]
\end{theorem}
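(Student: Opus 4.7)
The plan is to assume, for contradiction, that $\varphi(v_0,v_1)$ is a $\Sigma_1$-formula and $A\subseteq\omega_1$ a parameter such that $T$ is stationary iff $\varphi(T,A)$ for every $T\subseteq\omega_1$, and then to force a $\Sigma_1$-contradiction using the Woodin cardinal $\delta$ together with $\BMM$. Since ``$T$ is nonstationary'' is itself $\Sigma_1$ in $T$ over $H(\omega_2)$ (witnessed by the existence of a club disjoint from $T$), the formula
\[
\Psi(A)\;\equiv\;\exists T\subseteq\omega_1\,(T\text{ is nonstationary}\wedge \varphi(T,A))
\]
is $\Sigma_1$ with parameter $A\in H(\omega_2)$. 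The hypothesis on $\varphi$ gives $\neg\Psi(A)$ in $V$, so by $\BMM$ it suffices to force $\Psi(A)$ by some stationary-preserving forcing.

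A preliminary observation shapes the argument: if $T\in V$ is already nonstationary in $V$, then $\varphi(T,A)$ cannot be forced by any stationary-preserving forcing, since $\BMM$ would transfer $\varphi(T,A)$ back to $V$ and make $T$ stationary there. Hence any witness $T^*$ to $\Psi(A)$ in the extension must be a genuinely new subset of $\omega_1$ produced by the forcing.

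To produce such a $T^*$, I would use Woodin's stationary tower $\mathbb{Q}_{<\delta}$ below a condition chosen to make the forcing semiproper, and therefore stationary-preserving. Letting $G$ be $V$-generic, the associated generic elementary embedding $j\colon V\to M\subseteq V[G]$ satisfies $\operatorname{crit}(j)=\omega_1^V$, $j(\omega_1)=\delta$, $\omega_1^V=\omega_1^{V[G]}$, $A=j(A)\cap\omega_1$, and $M^\omega\cap V[G]\subseteq M$. By elementarity, $j(\varphi)(-,j(A))$ defines stationarity on subsets of $\delta$ in $M$. The core step is then to use $j$, together with the generic $G$, to construct a new $T^*\subseteq\omega_1$ in $V[G]$ satisfying both (i) $\varphi(T^*,A)$ in $V[G]$ and (ii) $T^*$ nonstationary in $V[G]$. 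For a suitably chosen stationary $S^*\in V$, a $\Sigma_1$-witness in $M$ for $j(\varphi)(j(S^*),j(A))$ should, via $M\subseteq V[G]$ and $A=j(A)\cap\omega_1$, transfer to a $\Sigma_1$-witness for $\varphi(T^*,A)$ in $V[G]$ once $T^*$ is taken to be an appropriate projection of $j(S^*)$ back to $\omega_1$, while the structure of the generic simultaneously provides a club in $V[G]$ disjoint from $T^*$.

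The main obstacle is exactly this coordination. Stationary-preserving forcing preserves the stationarity of every $V$-subset of $\omega_1$, so $T^*$ must be a new set in $V[G]$, and one must simultaneously arrange a $\Sigma_1$-witness for $\varphi(T^*,A)$ \emph{and} a club disjoint from $T^*$. I expect the stationary-tower condition has to be chosen carefully in terms of $A$ and $S^*$, and the Caicedo--Velickovic coding alluded to in the introduction likely plays a role in linking the abstract $\varphi$-witness extracted from $M$ to concrete combinatorial data in $V[G]$ that simultaneously thins $T^*$ out. Once $T^*$ is in hand, $\Psi(A)$ holds in $V[G]$, and $\BMM$ reflects $\Psi(A)$ down to $V$, contradicting the assumption that $\varphi(-,A)$ defines the stationary subsets of $\omega_1$.
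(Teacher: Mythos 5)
Your overall frame is the right one and matches the paper's: assume $\varphi(-,A)$ defines stationarity, note that $\Psi(A)\equiv\exists T\,(T\text{ nonstationary}\wedge\varphi(T,A))$ is $\Sigma_1$ in $A$, and aim to force $\Psi(A)$ by a stationary-set-preserving forcing so that $\BMM$ reflects it to $V$; your observation that the witness must be a \emph{new} subset of $\omega_1$ is also correct. But the core of the argument --- actually producing such a witness --- is missing, and the mechanism you sketch does not work. First, your list of properties of the generic embedding is internally inconsistent: if $\operatorname{crit}(j)=\omega_1^V$ and $j(\omega_1^V)=\delta$, then $M$ (and hence $V[G]$, since $M\subseteq V[G]$) sees $\omega_1^V$ as countable, so $\omega_1^{V[G]}\neq\omega_1^V$ and the forcing collapses $\omega_1$; no such forcing is stationary-set-preserving, so $\BMM$ cannot be applied to it. The paper does the opposite: it forces with the full tower $\forceP_{<\delta}$ below the condition $S_0=\{X\prec H(\omega_2):|X|=\aleph_1,\ X\text{ transitive}\}$, which arranges $j''H(\omega_2)^V=H(\omega_2)^V$ and $\operatorname{crit}(j)=\omega_2^V$, so that $A$ and $\omega_1$ are fixed, and $M^{<\delta}\cap V[g]\subseteq M$ guarantees that $V[g]$ is a stationary-set-preserving extension.

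Second, the witness is not obtained by ``projecting $j(S^*)$'' or by Caicedo--Velickovic coding. The key idea you are missing is Taylor's theorem that $\MA$ implies $\NS$ is not $\omega_1$-dense, applied \emph{inside $M$}: since $\operatorname{crit}(j)=\omega_2^V$ and $2^{\aleph_1}=\aleph_2$ (from $\BMM$), the set $P(\omega_1)\cap V$ belongs to $M$ and has size $\aleph_1$ there, so non-density yields a set $D$, stationary in $M$ (hence $\varphi(D,A)$ holds in $M$ by elementarity and persists upward), such that $T\setminus D$ is stationary for every $V$-stationary $T$. One then shoots a club through $\omega_1\setminus D$ over $V[g]$; the condition ``$T\setminus D$ stationary for all $V$-stationary $T$'' is exactly what makes this second forcing preserve all $V$-stationary sets while killing the stationarity of $D$. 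The two-step extension $V[g,h]$ is then stationary-set-preserving over $V$ and satisfies $\Psi(A)$, giving the contradiction. Without Taylor's theorem (or some substitute producing a stationary set almost disjoint, in the above sense, from every $V$-stationary set) and without the club-shooting step, your proposal has no way to coordinate ``$\varphi(T^*,A)$ holds'' with ``$T^*$ is nonstationary,'' which you yourself identify as the main obstacle.
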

\begin{proof}

Assume for a contradiction that there is a $\Sigma_1$- formula $\varphi$ and a set $A \subset \omega_1$ such that $\forall T \in P(\omega_1) (T$ is stationary iff $\varphi(T,A)$). Let $\delta$ be our Woodin cardinal.

Let \[ S_0=\{X \prec H(\omega_2) \, : \, |X|=\aleph_1 \land X \text{ is transitive} \} \] and let $g$ be $\forceP_{<\delta}$-generic over $V$ where $\forceP_{<\delta}$ is the associated full stationary tower. 
Let us pick a generic filter $g$ which contains $S_0$, which is possible by the stationarity of $S_0$. As usual we can form the generic elementary embedding in the universe $V[g]$:
\[ j: V \rightarrow M \subset V[g]\]
for a transitive inner model $M$ of $V[g]$. Membership in the generic filter $g$ for the stationary tower forcing can be characterized using $j$, namely
we have that
\[ \forall a (a \in g \leftrightarrow j" \bigcup a \in j(a))\]
thus $S_0 \in g$ yields $j" H(\omega_2)^V \in j(S_0)$.
In particular $j"H(\omega_2)^V$ is transitive, and as $H(\omega_2)^V$ is the transitive collapse of $j"H(\omega_2)^V$, we obtain that $j"H(\omega_2)^V=H(\omega_2)^V$ and that the critical point $crit(j)$ of the elementary embedding $j$ must be $\ge \omega_2^V$.
As $H(\omega_2)^V \in j(S_0)$, $|H(\omega_2)^V|=\aleph_1$ in $M$, so $crit(j)=\omega_2^V$.

We have $P(\omega_1) \cap V \in M$. Indeed by $\BMM$, $2^{\aleph_1}=\aleph_2$, so let $f: \omega_2 \rightarrow P(\omega_1) \cap V$ be a bijection with $f \in V$. Then $j(f) \upharpoonright \omega_2^V =f$ and $j(f) \in  M$ so $P(\omega_1) \cap V = ran(f) \in M$.

It is a theorem of A. D. Taylor (see \cite{Ta}) that $\MA$ implies that $\NS$ is not $\omega_1$-dense. As $P(\omega_1) \cap V$ has size $|\omega_2^V|=\aleph_1$ in $M$, there is a stationary set $D \subset \omega_1$ in $M$ such that $T \backslash D$ is stationary for every stationary $T \in P(\omega_1) \cap V$.
By Theorem 2.5.8 of \cite{La2}, $M^{<\delta} \cap V[g] \subset M$, which gives us that $V[g]$ is a stationary set preserving extension of $V$. Further it is still true in $V[g]$ that $D$ is stationary and $T \backslash D$ is stationary for all $T \notin \NS^{V}$.

In the next step we use the ordinary club shooting forcing $\forceP_{\omega_1 \backslash D}$ over $V[g]$ to shoot a club through the complement of $D$. The forcing will not destroy any stationary subsets from $V \cap P(\omega_1)$:

\begin{fact}
If $h$ denotes a generic filter for $\forceP_{\omega_1 \backslash D}$ over $V[g]$, then if $T \in P(\omega_1) \cap V$ is stationary in $V$ then it will remain stationary in $V[g,h]$.
\end{fact}
\begin{proof}
Fix a stationary (in $V$) $T \in P(\omega_1) \cap V$ and let $p \in\forceP_{\omega_1 \backslash D}$ be a condition and $\tau$ be a name in $V[g]^{\forceP_{\omega_1 \backslash D}}$  such that $p \Vdash \tau$ is a club in $\omega_1$. We shall find a $q' < p$ such that $q' \Vdash \tau \cap T \ne \emptyset$.

As $T\backslash D$ is stationary in $V[g]$, we fix a sufficiently large regular $\theta$ and may pick a countable $X \prec H(\theta)^{V[g]}$ such that $p,\forceP_{\omega_1 \backslash D}$ and $\tau$ are elments of $X$ and which satisfies $\alpha= X \cap \omega_1 \in T \backslash D$. It is straightforward to construct a condition $q \in \forceP_{\omega_1 \backslash D}$, $q<p$ with domain $\alpha$ such that for every dense $D \subset \forceP_{\omega_1 \backslash D}$, $D \in X$ there is a $\xi < \alpha$ such that $q \upharpoonright \xi \in D \cap X$. Finally $q':=q \cup \{ (\alpha, \alpha)\}$ is as desired.

\end{proof}
So $V[g,h]$ is a stationary set preserving extension of $V$.
But now by our hypothesis and by elementarity of $j:V \rightarrow M$ we get that 
\[ M \models \varphi(D,A) \]
and hence
\[ V[g] \models \varphi(D,A) \]
as $\varphi$ is $\Sigma_1$, and consequentially \[V[g,h] \models \varphi(D,A). \]
In $V[g,h]$ the set $D$ is nonstationary, thus
\[V[g,h] \models \exists D (D \text{ is nonstationary } \land \varphi(D,A)). \]
This statement is $\Sigma_1$ with parameter $A \subset \omega_1$ in the language of set theory, and as $\BMM$ is assumed to hold true in $V$ we conclude that 
\[V \models \exists D (D \text{ is nonstationary } \land \varphi(D,A)) \]
which is a contradiction.
\end{proof}

\subsection{Impossibility under $(\ast)$}

Our next goal is to derive the same conclusion from Woodin's $(\ast)$-principle.
Recall that the $(\ast)$-principle states that 
\begin{itemize}
\item $AD$ holds in $L(\mathbb{R})$ and
\item $L(P(\omega_1))$ is a $\forceP_{\text{max}}$-generic extension of $L(\mathbb{R})$.
\end{itemize}  It has been shown very recently by the third author and D. Aspero that $\MMpp$ implies $(\ast)$, solving a long-standing open question. Its proof paved the way for a third impossibility result, namely that under $\MM$, there is no $A \subset \omega_1$ and no $\Sigma_1$-formula which defines stationarity. The proof is due to the third author and 
Xiuyuan Sun and will appear soon (see \cite{RX}).
\begin{theorem}
Assume $(\ast)$, then there is no $A \subset \omega_1$ and no $\Sigma_1$-formula $\varphi(-,A)$ in the language of set theory such that \[\forall T \in P(\omega_1) \, (T \text{ is stationary } \Leftrightarrow \varphi(T,A)). \] 
\end{theorem}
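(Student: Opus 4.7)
The plan is to adapt the club-shooting reductio of Theorem~2.1 to the $(\ast)$-context, replacing the $\BMM$-reflection plus stationary-tower step with the $\forceP_{\text{max}}$-maximality enjoyed by $H(\omega_2)^V$. Assume toward a contradiction that $\varphi(-, A)$ is $\Sigma_1$ and defines the stationary subsets of $\omega_1$, and consider the auxiliary $\Sigma_1$-sentence
\[ \psi(A) \;\equiv\; \exists D \subseteq \omega_1\,(D \text{ is nonstationary} \land \varphi(D, A)), \]
which, by the hypothesis on $\varphi$, is false in $V$.

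First I would produce a witness to $\psi(A)$ in a mild outer model of $V$. Pick a stationary $D \in V$ whose complement is also stationary (by Solovay's splitting theorem); since $D$ is stationary, $\varphi(D, A)$ holds in $V$. Force over $V$ with the standard club-shooting poset $\forceP_{\omega_1 \setminus D}$: because $\omega_1 \setminus D$ is stationary, the proof of Fact~2.2 applies to show that $\omega_1$ is preserved and every stationary subset of $V$ remains stationary. In the resulting extension $V[h]$, the set $D$ has become nonstationary but $\varphi(D, A)$ persists by upward $\Sigma_1$-absoluteness, so $V[h] \models \psi(A)$. The second, and main, step is to pull $\psi(A)$ back to $V$. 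Under $(\ast)$ one has $V = L(\mathbb{R})[G]$ for a $\forceP_{\text{max}}$-generic $G$ over $L(\mathbb{R})$, and the parameter $A$ is coded by some $p_A \in G$. The plan is to repackage the club-shooting step as a $\forceP_{\text{max}}$-condition: starting from any condition $(M, I) \le p_A$, run the analogous internal construction inside $M$ (choose an internal stationary $\bar D \in M$ with stationary complement and club-shoot $\omega_1^M \setminus \bar D$) to produce $M[h^M]$, and verify that $(M[h^M], \NS^{M[h^M]})$ is a legitimate $\forceP_{\text{max}}$-condition $q \le p_A$ whose generic iteration sends $\bar D$ to a genuinely nonstationary witness to $\psi(A)$. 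Since $\varphi$ is $\Sigma_1$ and hence upward absolute, the pairs $(q, \bar D)$ realising this are dense in $\forceP_{\text{max}}$ below $p_A$, so the generic $G$ meets them and $\psi(A)$ holds in $V$ — the desired contradiction.

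The chief obstacle is the repackaging: verifying that the club-shooting extension $M[h^M]$ of the iterable model $M$ is itself a legitimate $\forceP_{\text{max}}$-condition, and that the added club $C \subseteq \omega_1^M \setminus \bar D$ persists along the generic iteration $j$ so that $j(\bar D)$ is nonstationary in the $\forceP_{\text{max}}$-extension rather than regaining stationarity. Upward $\Sigma_1$-absoluteness of $\varphi$ through the iteration handles the $\varphi(j(\bar D), A)$ half with minimal fuss; it is the iterability of $M[h^M]$ and the correct behaviour of the added club through the generic iteration that constitute the technical core of the argument, and where the standard $\forceP_{\text{max}}$-iterability machinery must be invoked carefully.
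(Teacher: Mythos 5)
Your global architecture matches the paper's endgame: work densely below a condition deciding $A$, manufacture conditions in which some set lies in the condition's ideal yet satisfies $\varphi$, and let the generic iteration send that set to a genuinely nonstationary witness of $\varphi(-,A)$ in $V$. But there is a genuine gap at the step you flag as the ``repackaging'': you choose $\bar D$ to be merely ``stationary with stationary complement'' inside $M$, and that is not enough to make $(M[h^M],\NS^{M[h^M]},a)$ a condition below $p_A$. The order on $\forceP_{\text{max}}$ requires that the iteration $i\colon p_A\to p_A^{\ast}$ living in $M$ still witness the extension, i.e.\ that every $i(I_{p_A})$-positive set of $p_A^{\ast}$ remain stationary in $M[h^M]$. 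Club-shooting through $\omega_1^M\setminus\bar D$ kills every stationary set contained in $\bar D$ modulo a club, so an arbitrary $\bar D$ can destroy positive sets of $p_A^{\ast}$ and the purported $q\le p_A$ fails. The missing ingredient is exactly the paper's first Claim: since $\forceP_{\text{max}}$-conditions satisfy $\MA$, Taylor's theorem gives that $\NS$ is not $\omega_1$-dense there, and since $p_A^{\ast}$ has size $\aleph_1$ in the condition one can choose a positive $T$ with $S\setminus T$ positive for \emph{every} positive $S\in p_A^{\ast}$; only such a $T$ may safely be declared small. (For the same reason, your preliminary step's assertion that $\forceP_{\omega_1\setminus D}$ preserves ``every stationary subset of $V$'' for an arbitrary co-stationary $D$ is false — Fact~2.2 needs $T\setminus D$ stationary — though that step is not actually used by your main argument.)

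A second unresolved point is the iterability of $M[h^M]$ with its nonstationary ideal, which you correctly identify as the technical core but do not supply. The paper avoids both difficulties by not forcing at all: it iterates $q$ generically to $q^{\ast}$ inside $V$, obtains $j(T)$ stationary with $S\setminus j(T)$ stationary for all $S\in j((J^+)^{p^{\ast}})$, passes to the normal ideal $L$ generated by $\NS^V\cup\{j(T)\}$ (so $j(J)=L\cap j(p^{\ast})$ and the positive sets of $p^{\ast}$ stay positive), and then takes a countable elementary substructure of $(H(\omega_2^V),L,A)$; iterability of the collapse is then Woodin's Lemma 3.14 rather than something to be proved for a hand-built forcing extension of $M$. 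Without the Taylor-style choice of $T$ and without a source of iterability for the new condition, your construction does not yet yield a condition below $p_A$, and the density argument does not go through.
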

\begin{proof}

Let $V=L(\mathbb{R})[g]$, where $g$ is $\forceP_{\text{max}}$-generic over $L(\mathbb{R})$. Suppose for a contradiction that there is a $\forceP_{\text{max}}$-name for a subset of $\omega_1$, $\dot{A}$, a $\Sigma_1$-formula $\varphi(-,\dot{A})$ and a $\forceP_{\text{max}}$-condition $p= (p,J,b)$ such that
\[ p \Vdash\forall S (S \text{ is stationary iff } \varphi(S, \dot{A})) \]
By \cite{LLS}, the conclusion of the theorem is true for every $A \in P(\omega_1)^{L(\mathbb{R})}$, so we may assume that $\dot{A}^g=A  \in P(\omega_1)^{V} \backslash L(\mathbb{R})$. It is well-known that any such $A \subset \omega_1$ gives rise to a $\forceP_{\text{max}}$-generic filter $g_A:=\{ (p,I,a) \in \forceP_{\text{max}} \, : \,$ $p$ generically iterates to some $p_{\omega_1}=(p_{\omega_1}, \NS^V, A) \}$ and $L(\mathbb{R})[g_A]=L(\mathbb{R})[g]$. Hence we assume without loss of generality that the $\dot{A}$ above is the name for such a distinguished subset of $\omega_1$ which gives rise to the $\forceP_{\text{max}}$-generic filter.

\begin{flushleft}
\textbf{Claim} Let $q < p$, $q=(q,NS^q_{\omega_1},c)$ such that $i: p \rightarrow p^{\ast}=(p^{\ast},J,c)$ witnesses $q<_{\forceP_{\text{max}}} p$ then
 \end{flushleft} 
\[ q \Vdash \exists T \in P(\omega_1) \backslash NS^q_{\omega_1} (\forall S \in (J^{+})^{p^{\ast}} (S \backslash T \notin NS_{\omega_1}^q)).\]

\begin{proof}[proof of the Claim]
Assume that $q < p$ as witnessed by $i: p \rightarrow p^{\ast}$, then
$|p| =\aleph_1$ in $q$ and $q \models \MA$. Hence by the result of Taylor, there is no $\omega_1$-dense, normal ideal in $q$. In particular, there must be a positive set $T \in (\NS^{+})^q$ such that $S \backslash T$ is an element of $(\NS^+)^q$ for all $S \in p^{\ast}, S \in (\NS^+)^q$.

\end{proof}
Now let $q$ be as in the last Claim and for $g$ our $\forceP_{\text{max}}$-generic over $L(\forceR)$ we let $q \in g$ which we can assume by density. By the choice of $p$ and $q<p$, $q \in g$ we have that
\[ V \models S \subset \omega_1 \text{ is stationary } \Leftrightarrow \varphi(S,A).\]
Let $j: q \rightarrow q^{\ast}=(q^{\ast},K,A)$ be the unique iteration of $q$ of length $\omega_1$ given by $g$ which sends $c$ to $A$. By standard $\forceP_{\text{max}}$-arguments, $K=NS_{\omega_1}^V \cap q^{\ast}$.
By elementarity
\[q^{\ast} \models j(T) \in P(\omega_1) \backslash K (\forall S \in j((J^+)^{p^{\ast}} (S \backslash j(T) \notin K))\] and as $\varphi$ is $\Sigma_1$,
\[q^{\ast} \models \varphi(j(T),A). \]
In other words $j(T)$ is stationary in $V$ and for all $S \in j((J^+)^{p^{\ast}})$, $S \backslash j(T)$ is stationary in $V$. Hence
\begin{align*}
(H(\omega_2^V), \NS^V, A) \models & \, j(T) \notin \NS \land \varphi(j(T),A)\\ &\land S \backslash j(T)\notin \NS \text{ for all } S \in j((J^+)^{p^{\ast}}. 
\end{align*}
Now let $L$ denote the ideal generated from $\NS^V \cup \{j(T)\}$, i.e. $X \in L$ iff $(X \backslash j(T)) \cap C = \emptyset$ for some club $C$. It is easy to see that $L$ is a normal ideal and $j((J^+)^{p^{\ast}}) \subset L^+$, so $j(J)=L \cap j(p^{\ast})$.

Now we take a countable elementary substructure of $(H(\omega_2^V), L, A)$ and transitively collapse it. This results in a $\forceP_{\text{max}}$ condition $(r, \bar{L},a)$ (see \cite{Wo}, Lemma 3.14) which gives us:

\begin{flushleft}
\textbf{Claim:} There is a condition $r=(r, \bar{L},a) <_{\forceP_{\text{max}}} p$ such that if $j: p \rightarrow p^{\ast}$ witnesses that $r <_{\forceP_{\text{max}}} p$, then 
\end{flushleft}
\[ r \Vdash \exists T \in \bar{L}  \,  (\varphi(T,a)) \]
But now if we consider the generic iteration of $r$ given by $g$, \[k: (r,\bar{L},a) \rightarrow (r^{\ast}, \NS \cap r^{\ast} ,A)\] of length $\omega_1$, then $r^{\ast} \models \varphi(k(T), A)$ and so $V \models \varphi(k(T), A)$. Yet $k(T)$ is nonstationary in $V=L(\forceR)[g]$. But $r <_{\forceP_{\text{max}}} p$ which is a contradiction.
\end{proof}
\

\section{Possibility Results}
\subsection{$\BPFA$ and the $\Pi_1$-definabilty of $\NS$}
Goal of this section is to show that $\BPFA$ is consistent with a $\Sigma_1$-definition of $\NS^+$. Its proof relies on a new coding technique  which exploits mutually stationary sets.

\subsubsection{Mutually stationary preserving forcing}

\begin{definition}
    Let $K$ be a collection of regular cardinals with bounded below $\kappa$, and suppose that we have $S_\eta\subseteq \eta$ for each $\eta \in K$. Then the collection of sets $\{ S_\eta \mid \eta\in K \}$ is \emph{mutually stationary} if and only if for all algebras $\mathcal{A}$ on $\kappa$, there is an  $N\prec \mathcal{A}$ such that $$       \text{for all }\eta\in K\cap N,\  \sup(N\cap \eta)\in S_\eta.$$
\end{definition}

Foreman-Magidor (\cite{FM}) show that every sequence $\vec S$ with $S_\eta \subset \eta \cap Cof(\omega)$ is mutually stationary. Let $\mathcal{T}_{\vec{S}}$ be the collection of all countable $N$ such that for all $\eta_i\in N$,  $\sup(N\cap \eta_i)\in S_i.$

\begin{theorem}[Foreman-Magidor]\label{thm:fm}
    Let $\langle \eta_i \mid i<j \rangle$ be an increasing sequence of regular cardinals. Let $\vec S=\langle S_i \mid    i<j  \rangle$ be a sequence of stationary sets such that $S_i\subseteq \eta_i \cap \Cof(\omega)$. If $\theta$ is a regular cardinal greater than all $\eta_i$ and $\mathcal{A}$ is a algebra on $\theta$, then there is a $N\prec \mathcal{A}$ belongs to the class $\mathcal{T}_{\vec{S}}$. In particular, $\vec S$ is mutually stationary.
\end{theorem}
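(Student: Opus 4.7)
The plan is to build $N$ as the union of an elementary chain $N_0 \prec N_1 \prec \cdots \prec \mathcal{A}$ of countable substructures. The hypothesis $S_i \subseteq \eta_i \cap \Cof(\omega)$ is crucial: because $\sup(N \cap \eta_i) = \sup_n \sup(N_n \cap \eta_i)$, the limiting supremum automatically has cofinality $\omega$, matching the targets. The problem thus reduces to steering this limit into $S_i$ at each relevant level.

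First I would dispose of the one-cardinal case: for a single stationary $S_0 \subseteq \eta_0 \cap \Cof(\omega)$, the set of $\sup(N \cap \eta_0)$ for countable $N \prec \mathcal{A}$ with $\eta_0 \in N$ contains a club of $\eta_0 \cap \Cof(\omega)$ by standard reflection, so intersecting with $S_0$ yields the desired $N$. For the general case, I would enumerate in order type $\omega$ the cardinals $\eta_{i_k}$ that the countable set of initial Skolem parameters forces into $N$, recursively pick targets $\alpha_{i_k} \in S_{i_k}$ using stationarity of $S_{i_k}$, and then build $N_n$ inductively so that $\sup(N_n \cap \eta_{i_k}) < \alpha_{i_k}$ for all $k \leq n$ while $\sup_n \sup(N_n \cap \eta_{i_k}) = \alpha_{i_k}$. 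The core of the construction is an extension lemma: from a countable $M \prec \mathcal{A}$ with $\sup(M \cap \eta_{i_k}) < \alpha_{i_k}$, produce a larger countable $M' \prec \mathcal{A}$ whose sup at each $\eta_{i_k}$ is strictly larger than $M$'s but still strictly below $\alpha_{i_k}$. Cofinality $\omega$ of $\alpha_{i_k}$ supplies a cofinal $\omega$-sequence from which to draw a new element to throw into $M$ before closing off under the Skolem functions of $\mathcal{A}$.

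The main obstacle will be coordination between levels: closing off Skolem functions in order to push up the sup at one $\eta_{i_k}$ may introduce ordinals at a different level $\eta_{i_{k'}}$ that overshoot $\alpha_{i_{k'}}$. I would counter this using the regularity of each $\eta_{i_{k'}}$ together with the countability of the parameters added at each step: the Skolem closure of a countable set contributes only countably many ordinals to $\eta_{i_{k'}}$, hence a uniform bound strictly below $\eta_{i_{k'}}$. By choosing the targets $\alpha_{i_k}$ in a recursive order that takes account of these uniform bounds (using stationarity of $S_{i_k}$ to place each $\alpha_{i_k}$ above them), one ensures that the extension step can always proceed while simultaneously respecting all of the target inequalities. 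The union $N = \bigcup_n N_n$ is then the required member of $\mathcal{T}_{\vec{S}}$.
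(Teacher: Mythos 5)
The paper does not actually prove this statement; it is quoted from Foreman--Magidor \cite{FM}, so your argument can only be measured against the known proof. Your skeleton is the right one (a countable elementary chain, targets chosen in the $S_i$ by stationarity, an $\omega$-cofinal approach to each target), and your one-cardinal case is correct. But the resolution you offer for the coordination problem --- the very point you correctly identify as the crux --- does not work as stated. You propose to bound the ordinals that Skolem closure throws below a level $\eta_{i_{k'}}$ (countably many, hence bounded below the regular $\eta_{i_{k'}}$) and then to place the target $\alpha_{i_{k'}} \in S_{i_{k'}}$ above these bounds. This is circular: such spill-over into $\eta_{i_{k'}}$ occurs at \emph{every} one of the $\omega$ many later stages, each time you push up the sup at some other level, and the supremum of all of it over the whole construction is exactly $\sup(N \cap \eta_{i_{k'}})$ --- the quantity you are trying to steer. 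The target $\alpha_{i_{k'}}$ must be fixed at a finite stage, before the bounds produced by later extension steps (which themselves depend on the other targets and on the cofinal sequences chosen for them) are known; and being ``above the bounds seen so far'' is in any case the wrong requirement --- you need all future spill-over to land \emph{strictly below} $\alpha_{i_{k'}}$ while your deliberate additions are cofinal in it, and countability only traps the spill-over below $\eta_{i_{k'}}$, not below $\alpha_{i_{k'}}$.

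What is missing is the one genuinely load-bearing lemma of the Foreman--Magidor argument: if $N \prec \mathcal{A}$ is countable, $\lambda < \mu$ are regular with $\lambda, \mu \in N$, and $\beta < \lambda$, then $\sup(\mathrm{Sk}(N \cup \{\beta\}) \cap \mu) = \sup(N \cap \mu)$; indeed, for a Skolem term $t$ and parameters $\bar{n} \in N$ the ordinal $\sup(\{t(\gamma,\bar{n}) : \gamma < \lambda\} \cap \mu)$ lies in $N \cap \mu$ because $\mu$ is regular and greater than $\lambda$. This shows that interference propagates only \emph{downward}, and the construction must be organized around that asymmetry; the remaining downward interference is then tamed by choosing each target as a suitable closure point of auxiliary functions of the parameters living at the higher levels, not merely as an arbitrary element of $S_{i_{k'}}$ above some ordinal. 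Two smaller gaps: your enumeration of levels only covers the cardinals $\eta_i$ present in the initial hull, whereas the conclusion quantifies over all $\eta_i \in N$, including those entering at later stages; and nothing in your extension lemma prevents the Skolem closure, after you add a point below $\alpha_{i_k}$, from overshooting $\alpha_{i_k}$ \emph{within} $\eta_{i_k}$ itself --- in the one-cardinal case you avoided this by landing on a closure point, but in the recursive construction those closure points must be computed relative to parameters that are only added later.
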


From now on, we assume all stationary subsets of ordinals discussed in this section are concentrated on countable cofinality. The corresponding notion for being club in this context is that of an unbounded set which is closed under $\omega$-sequences.

\begin{definition}
    Suppose $\vec S=\{ S_\eta\mid \eta\in K\}$ is mutually stationary and that for every $\eta \in K$, $S_{\eta}$ is stationary, co-stationary in $\eta$ $ \cap $ Cof $ ({\omega})$. We say a forcing poset $\mbp$ is $\vec S$-preserving if the following holds:
    Suppose $\theta> 2^{ \vert \mbp^+\vert}$ is regular. Suppose $M$ is a countable elementary submodel of $H(\theta)$ with $\{\mbp,\vec S\}\subset M$ and $M\in \mathcal{T}_{\vec{S}}$. Suppose $p\in \mbp\cap M$. Then there exists a $(M,\mbp )$-generic condition $q$ extending $p$.
\end{definition}

\textbf{Remark}
    \begin{enumerate}
        \item Any proper forcing is $\vec S$-preserving.
        \item When $K=\{ \omega_1\}$ and $\vec{S}=S \subset \omega_1$, the definition of $\vec S$-preserving is identical to the usual definition of $S$-proper forcing.
        \item Let $\vec{S}$ be such that each $S_{\eta} \in \vec S$ is stationary, co-stationary in $\eta \cap Cof(\omega)$.
       Then an example of a non-proper, $\vec S$ preserving forcing is the forcing poset $Club(S_\eta)$, i.e, the forcing which adds an unbounded subset to $S_\eta$ which is closed under $\omega$-sequences, via countable approximations.
    \end{enumerate}

The iteration theorems for countable support iterated proper forcing can be generalized to $\vec S$-preserving forcing.

\begin{lemma}\label{lem: preservation}
    If $\langle P_i, \dot{Q}_i \mid  i< \alpha \rangle$ is a countable support iterated forcing system and for each $i<\alpha$, $\Vdash_{P_i}$``$\dot{Q}_i$ is $\vec S$-preserving'' then $P_\alpha$ is $\vec S$-preserving.
\end{lemma}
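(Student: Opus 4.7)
The plan is to follow Shelah's proof of the countable support iteration theorem for proper forcing, replacing ``proper'' by ``$\vec S$-preserving'' throughout and restricting the class of relevant countable submodels from arbitrary $M\prec H(\theta)$ to those in $\mathcal{T}_{\vec S}$. I would prove the following strengthened master-condition statement by induction on $\alpha$: for any sufficiently large regular $\theta$, any countable $M\prec H(\theta)$ with $\{\mbp_\alpha,\vec S\}\in M$ and $M\in\mathcal{T}_{\vec S}$, any $\beta_0\in M\cap(\alpha+1)$, any $(M,\mbp_{\beta_0})$-generic $q_0$, and any $p\in\mbp_\alpha\cap M$ with $p\restriction\beta_0\ge q_0$, there is a $(M,\mbp_\alpha)$-generic $q\le p$ in $\mbp_\alpha$ with $q\restriction\beta_0=q_0$. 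Taking $\beta_0=0$ yields the lemma.

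The induction rests on an auxiliary preservation sublemma: if $\mbq$ is $\vec S$-preserving, then in $V^{\mbq}$ each $S_\eta$ remains stationary on $\eta\cap\Cof(\omega)$, and whenever $M\in\mathcal{T}_{\vec S}$ with $\mbq\in M$, one has $M[\dot G]\in\mathcal{T}_{\vec S}^{V[\dot G]}$. The second clause is automatic since $M[\dot G]\cap\eta=M\cap\eta$ and $\eta$ remains regular. The first is the usual master-condition argument: if $\dot C\in M$ names a club of $\eta$, an $(M,\mbq)$-generic condition forces the $\omega$-cofinal ordinal $\sup(M\cap\eta)$ --- which lies in $S_\eta$ by assumption on $M$ --- to belong to $\dot C^{\dot G}$, since $\dot C^{\dot G}$ is unbounded below $\sup(M\cap\eta)$ by density in $M$ and closed.

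The successor step $\alpha=\beta+1$ is then immediate. Apply the induction hypothesis to $\mbp_\beta$ to obtain an $(M,\mbp_\beta)$-generic extension $q^*$ of $q_0$ below $p\restriction\beta$. In $V[G_\beta]$, the sublemma places $M[G_\beta]$ in $\mathcal{T}_{\vec S}^{V[G_\beta]}$, and by hypothesis $\dot Q_\beta[G_\beta]$ is $\vec S$-preserving there, so there is an $(M[G_\beta],\dot Q_\beta[G_\beta])$-generic $\dot r\le p(\beta)$; the concatenation $q=q^*{}^\frown\dot r$ works.

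The limit case is where the main obstacle sits. Enumerate $M\cap\alpha$ cofinally as $\langle\beta_n\mid n<\omega\rangle$ starting from the given $\beta_0$ (possible since $M$ is countable) and enumerate the dense open subsets of $\mbp_\alpha$ in $M$ as $\langle D_n\mid n<\omega\rangle$. Build, by recursion on $n$, a decreasing chain $p_n\in\mbp_\alpha\cap M$ with $p_0=p$ and $p_{n+1}\in D_n$, together with $(M,\mbp_{\beta_n})$-generic conditions $q_n\le p_n\restriction\beta_n$ satisfying $q_{n+1}\restriction\beta_n=q_n$; each step invokes the induction hypothesis at $\beta_{n+1}$ relative to the root $q_n$. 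Finally set $q\in\mbp_\alpha$ with $\supp(q)=\bigcup_n\supp(q_n)\subseteq M\cap\alpha$ --- automatically countable --- and $q\restriction\beta_n=q_n$. The technical core, as in the proper case, is to verify that $q$ is a genuine condition of $\mbp_\alpha$, i.e., that at every iteration limit $\gamma\le\alpha$ occurring in the support the fusion does not collapse, and that $q$ is $(M,\mbp_\alpha)$-generic (which reduces to meeting each $D_n$, secured at stage $n$). Both points hinge on the preservation sublemma, which guarantees that the sets $S_\eta$ survive the iteration and that $\mathcal{T}_{\vec S}$-membership is transmitted to the intermediate extensions, so that the $\vec S$-preserving hypothesis at each subsequent stage can be legitimately invoked.
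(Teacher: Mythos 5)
Your proposal is correct and follows essentially the same route as the paper: the paper likewise reduces the lemma to Shelah's induction statement $(*)$ from the proof of the countable support proper iteration theorem, relativized to countable models $N\in\mathcal{T}_{\vec S}$, and your strengthened master-condition statement (with the root $\beta_0$ and the requirement $q\upharpoonright\beta_0=q_0$) is exactly that $(*)$. Your write-up is in fact somewhat more detailed than the paper's sketch, which simply asserts that Shelah's original argument goes through for models in $\mathcal{T}_{\vec S}$.
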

\begin{proof}(Sketch, following the proof of \cite{S}, Theorem 3.2 )
    We will only need to show by induction on $j\le \alpha$ that for any $N\in \mathcal T_{\vec{S}}$, if $j\in N$, then:

    (*)$_N$ For all $i<j$, $i\in N$ and for all $p\in N\cap P_j$ and $q\in P_i$ is $(N,O_i)$ generic condition extending $p\upharpoonright i$, there is an $r\in P_j$ such that $r$ is $(N, P_j)$-generic condition extending $p$ and $r\upharpoonright i=q$.

    The statement (*)$_N$ is identical to the statement (*) in the proof of \cite{S}, Theorem 3.2. It can be checked that the original proof also works here.
\end{proof}

In our proof of the main theorem of this section, we will use forcings which have a specific form, so they get a name.
\begin{definition}
    Let $\kappa$ be an inaccessible cardinal. Let $\vec S= \langle S_{i} \mid i< \kappa \rangle$ be mutually stationary. We say a forcing poset $\mbp$ is a $ \vec S$-coding if $\delta \le \kappa$ and $\mbp= \langle \forceP_\alpha, \dot{\forceQ}_\alpha \mid \alpha<\delta \rangle $ satisfies the following:
    \begin{itemize}
        \item $\mbp$ is a countable support iteration
        \item For any $\alpha<\delta$, one of the followings holds:
        \begin{enumerate}
           
            \item Assume that $\alpha$ is inaccessible and $\mbp_\alpha$ is forcing equivalent to a forcing of size less or equal than $\alpha$.\footnote{We say two forcing $P$ and $Q$ are equivalent if their Boolean completions $B(P)$ and $B(Q)$ are isomorphism.} Assume that in $V^{\mbp_\alpha}$, $\langle B_\beta\mid \beta <2^\alpha\rangle$ is an enumeration of $V_{\alpha}$. Then $\dot{Q}_\alpha$ is allowed to be $$\prod_{j\in B_\beta}Club(S_{\alpha\cdot(\beta+1)+2j}) \times \prod_{j\notin B_\beta}Club(S_{\alpha\cdot(\beta+1)+2j+1})$$ using countable support.
            
             \item In all other cases, we have that $\Vdash_{\mbp_\alpha}\dot{Q}_\alpha$ is proper.
        \end{enumerate}
    \end{itemize}
    Let $\eta$ be an regular cardinal, we  say $\mbp$ is an $\eta$-$\vec S$ coding if (1) is replaced by
    \begin{itemize}
        \item[(1')]  $\alpha \ge \eta$ is inaccessible and $\mbp_\alpha$ is forcing equivalent to a forcing of size less or equal than $\alpha$. In $V^{\mbp_\alpha}$, $\langle B_\beta\mid \beta <2^\alpha\rangle$ is an enumeration of $V_\alpha$. Then $\dot{Q}_\alpha$ is allowed  to be $$\prod_{j\in B_\beta}Club(S_{\alpha\cdot(\beta+1)+2j}) \times \prod_{j\notin B_\beta}Club(S_{\alpha\cdot(\beta+1)+2j+1})$$
    \end{itemize}
\end{definition}
By Lemma \ref{lem: preservation}, if $\mbp$ is a $\vec S$-coding forcing, then $\mbp$ is $\vec S$-preserving. It is also clear that an iteration is a $\vec S$-coding forcing if and only all its initial segments are $\vec S$-coding forcing.

\begin{lemma}\label{lem: stationary preservation}
    Suppose that $\vec S$ is stationary, co-stationary. Suppose $\mbp$ is an $\vec S$ coding forcing. Then for any $i \in \kappa$, the followings are equivalent:
    \begin{itemize}
        \item[(a)] $\Vdash_{\mbp} S_{i}$ contains an $\omega$-club.
        \item[(b)] there are $\beta,\alpha,j$ and a set $B_{\beta} \subset V_{\alpha}$ such that $j \in B_{\beta}$ if $\beta\cdot(\alpha+1)+2j=i$ and $i$ is even and $j \notin B_{\beta}$ if $\beta\cdot(\alpha+1)+2j+1=i$ and  $i$ is odd.
    \end{itemize}
\end{lemma}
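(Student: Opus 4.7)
The plan is to prove (b) $\Rightarrow$ (a) directly and the substantive direction (a) $\Rightarrow$ (b) by contrapositive. For (b) $\Rightarrow$ (a), if at some inaccessible stage $\alpha$ and some $\beta<2^\alpha$ the iterand $\dot Q_\alpha$ has $Club(S_i)$ as one of its factors, then that factor's generic is an $\omega$-club contained in $S_i$, witnessed already in $V^{\mbp_{\alpha+1}}$ and a fortiori in $V^{\mbp}$.

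For (a) $\Rightarrow$ (b), assume the contrapositive that $S_i$ never appears as a $Club$-factor of any $\dot Q_\alpha$, and aim to show that $(\eta_i\cap\Cof(\omega))\setminus S_i$ remains $\omega$-stationary in $V^{\mbp}$; this rules out $S_i$ containing an $\omega$-club. Form the \emph{flipped} sequence $\vec S^i$, identical to $\vec S$ except that the $i$-th coordinate is replaced by $(\eta_i\cap\Cof(\omega))\setminus S_i$. Since $\vec S$ is stationary and co-stationary, $\vec S^i$ is still a sequence of stationary subsets of regular cardinals concentrated on countable cofinality, so Theorem~\ref{thm:fm} of Foreman--Magidor ensures $\vec S^i$ is mutually stationary. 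The heart of the argument is to show $\mbp$ is $\vec S^i$-preserving.

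By Lemma~\ref{lem: preservation} this reduces to verifying each iterand $\dot Q_\alpha$ is $\vec S^i$-preserving. Proper iterands are automatic by Remark~(1). For an iterand of the coding form
\[
\prod_{j\in B_\beta}Club(S_{\alpha\cdot(\beta+1)+2j})\times\prod_{j\notin B_\beta}Club(S_{\alpha\cdot(\beta+1)+2j+1}),
\]
the contrapositive hypothesis prevents $Club(S_i)$ from appearing, so every factor is $Club(S_{j'})$ for some $j'\neq i$. Since $\vec S^i$ agrees with $\vec S$ outside coordinate $i$, any $M\in\mathcal T_{\vec S^i}$ containing the forcing satisfies $\sup(M\cap\eta_{j'})\in S_{j'}$, which is precisely the hypothesis needed to produce $(M,Club(S_{j'}))$-generic extensions of any condition in $M$. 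Viewing the countable-support product as a (possibly long) countable-support iteration and rerunning the proof of Lemma~\ref{lem: preservation} then yields $\vec S^i$-preservation of the product.

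Granted $\vec S^i$-preservation of $\mbp$, we conclude as follows. Given any $\mbp$-name $\dot C$ for an $\omega$-club in $\eta_i$ and any $p\in\mbp$, pick $\theta$ large regular and $M\in\mathcal T_{\vec S^i}$ countable with $\{\mbp,\vec S^i,\dot C,p\}\subset M\prec H(\theta)$, and let $q\le p$ be $(M,\mbp)$-generic. For any $G\ni q$, elementarity of $M[G]$ and unboundedness of $\dot C^G$ in $\eta_i$ give $\sup(M\cap\dot C^G)=\sup(M\cap\eta_i)$, and the $\omega$-closure of $\dot C^G$ places this ordinal in $\dot C^G$; by choice of $M$ it also lies in $(\eta_i\cap\Cof(\omega))\setminus S_i$. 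Thus $\dot C^G$ meets the complement of $S_i$, so $S_i$ contains no $\omega$-club, as desired. The main technical obstacle is the product step in the $\vec S^i$-preservation verification: one must handle the fact that in the coding clause the product may have as many as $|V_\alpha|$ factors, but reorganising it as a countable-support iteration and reapplying Lemma~\ref{lem: preservation} should suffice.
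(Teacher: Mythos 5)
Your proposal is correct and follows essentially the same route as the paper: prove (a)$\Rightarrow$(b) by contrapositive, flip the $i$-th coordinate of $\vec S$ to its complement, invoke Foreman--Magidor for mutual stationarity of the flipped sequence, verify preservation iterand-by-iterand (proper iterands for free, coding iterands via the fact that $Club(S_i)$ never occurs as a factor so $\sup(N\cap\eta_{j'})\in S_{j'}$ for every nontrivial coordinate), and conclude that the complement of $S_i$ stays stationary. The only cosmetic difference is at the product step, which the paper handles by a direct fusion argument (a decreasing $\omega$-sequence of conditions whose coordinatewise closures add the point $\sup(N\cap\eta_{j'})\in S_{j'}$) rather than by re-expressing the product as an iteration.
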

\begin{proof} ((b) $\rightarrow$ (a)) Follow from the definition of the forcing.

    ((a) $\rightarrow$ (b)) Fix an $i$ and assume without loss of generality that $i$ is even. Write $i= \alpha \cdot ({\beta+1}) +2j$ and suppose for a contradiction that  $j$ is not an element of $B_{\beta}$. By the definition of an $\vec S$ coding forcing, we must have added a club through $S_{\alpha (\beta+1)+2j+1}$ instead.  Let $\vec T$ be the sequence $\langle T_k \mid k<\kappa \rangle$, where $T_k=S_{k}$ if $k\neq i$ and $T_i=\eta_i\setminus S_{i}$. It follows from Theorem \ref{thm:fm} again that $\vec T$ is mutually stationary. We will prove that $\mbp$ is $\vec T$-preserving to derive a contradiction. Indeed we shall see that $\vec T$-perservation implies that $\eta_i \setminus S_i$ must remain stationary after forcing with $\forceP$, yet $\forceP \Vdash `` S_i$ contains an $\omega$-club$"$ which is impossible.

    To see that $\vec{T}$ preserving forcings preserve the stationarity of every $S_{\eta_i} \in \vec{T}$, we only need to note that for any name $\dot C$ of a subset of $\eta_i$ which is unbounded and $\omega$-closed, and any countable elementary substructure $N$ which contains $\dot C$  and for  which $\sup (N\cap \eta_i)\in S_{\eta_i}$, any $(N,\mbp)$-generic condition $q$, $q$ forces $\dot C\cap ( S_{\eta_i})\neq \emptyset$.

  Next we show by induction that each $Q_{\beta}$ is forced to be $\vec T$-preserving. Work in $V[G_\beta]$. If $\dot Q_{\beta}/G_\beta$ is proper, then it is also $\vec T$-preserving. Otherwise, (1) holds. Now $\dot{Q}_\beta/G_{\beta}$ is a countable support product of shooting club forcing. Fix any $N\in \mathcal T_T$ which is a countable substructure of $H(\theta)^{V[G_\beta]}$. For any $p\in N\cap \dot Q_{\beta}$, we can construct a countable decreasing sequence of conditions $\langle p_i \mid i<\omega \rangle$ meeting all dense set in $N$. Define $q$ by setting $q(i)$ to be the closure of $\bigcup_{n<\omega} p_n(i)$ if $i\in N$ and trivial otherwise. Note any nontrivial $q(i)$ is equal to $\bigcup_{n<\omega} p_n(i)\cup \{\sup(N\cap \eta_i)\}$, where $\eta_i=\sup (S_i)$ is a regular cardinal. As $N\in \mathcal T_{T}$ and no $\beta,\alpha,j$ witness that $\beta\cdot(\alpha+1)+2j=i$ or $\beta\cdot(\alpha+1)+2j+1=i$, we have $\sup(N\cap \eta_{i})\in S_i$, whenever $q(i)$ is non-trivial. Hence $q$ is a condition.
\end{proof}

The proof also show that $\vec S$-coding preserves stationary subset of $\omega_1$ if $ \sup(S_0)>\omega_1$. As a Corollary of Lemma \ref{lem: stationary preservation} and the definition of $\vec S$-coding, in any generic extension by $\vec S$-coding and any even $i$, at most one of $S_{i}$ and $S_{i+1}$ contains a club.
\begin{lemma}\label{lem: composition}
    Suppose $\mbp= \langle P_\alpha, \dot{Q}_\alpha \mid \alpha<\delta \rangle $ is a countable support iteration. Suppose for any $\alpha>0$, $\dot{Q}_\alpha$ is forced to be $\eta_\alpha$-$\vec S$ coding of length $l(\alpha)$, where $\eta_ \alpha=\max \{\vert \mbp_\alpha^+\vert, \Sigma_{\beta<\alpha}l(\beta) \}$. Also let $\eta_0$ be regular. Then $\mbp$ is forcing equivalent to a $\eta_0$-$\vec S$ coding.
\end{lemma}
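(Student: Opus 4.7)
The plan is to induct on $\delta$, flattening the nested iteration into a single countable support iteration and then verifying that the flattened iteration is still an $\eta_0$-$\vec S$ coding. Limit stages are handled by the standard fact that the countable support limit of countable support iterations is a countable support iteration, so the nontrivial work is concentrated in the successor case.

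At a successor stage $\delta = \gamma+1$, the inductive hypothesis provides an $\eta_0$-$\vec S$ coding $\mbq$ of length $L = \Sigma_{\beta<\gamma}l(\beta)$ forcing equivalent to $\mbp_\gamma$, while $\dot{Q}_\gamma$ is forced to be an $\eta_\gamma$-$\vec S$ coding $\langle \dot{R}_\mu, \dot{\dot{Q}}_\mu \mid \mu < l(\gamma)\rangle$. I would concatenate these into a single countable support iteration $\mbq^+$ of length $L + l(\gamma)$ by placing $\dot{\dot{Q}}_\mu$ (reinterpreted as a $\mbq^+_{L+\mu}$-name via the equivalence) at position $L+\mu$. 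Standard associativity of countable support iteration then shows $\mbq^+$ is forcing equivalent to $\mbp = \mbp_\gamma * \dot{Q}_\gamma$.

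The core of the argument is checking that $\mbq^+$ really meets the definition of an $\eta_0$-$\vec S$ coding stage by stage. Positions $<L$ are covered by $\mbq$, and for positions $L + \mu$ at which $\dot{\dot{Q}}_\mu$ was forced proper (case (2)), the step of $\mbq^+$ at $L+\mu$ remains proper. The only delicate case is (1'), where $\mu \ge \eta_\gamma$ is inaccessible and the product form applies. Here the key computation is the ordinal identity $L + \mu = \mu$: the hypothesis $L \le \Sigma_{\beta<\gamma} l(\beta) \le \eta_\gamma \le \mu$ combined with the inaccessibility (hence cardinality) of $\mu$ forces $L + \alpha < \mu$ for every $\alpha < \mu$, and hence $L + \mu = \mu$. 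The size bound ``$\mbq^+_{L+\mu}$ is forcing equivalent to a poset of size $\le L+\mu = \mu$'' follows analogously from $|\mbp_\gamma^+| \le \eta_\gamma \le \mu$ together with the corresponding bound on $\dot{R}_\mu$ inherited from $\dot{Q}_\gamma$ being an $\eta_\gamma$-$\vec S$ coding. Consequently the product $\prod_{j \in B_\beta}Club(S_{\mu\cdot(\beta+1)+2j}) \times \prod_{j \notin B_\beta}Club(S_{\mu\cdot(\beta+1)+2j+1})$ originally prescribed at position $\mu$ of $\dot{Q}_\gamma$ lives, inside $\mbq^+$, at position $L+\mu = \mu$ using exactly the same stationary sets $S_i$, which is precisely what an $\eta_0$-$\vec S$ coding at that position demands.

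The principal obstacle is exactly this bookkeeping: verifying that the two hypotheses $\eta_\gamma \ge |\mbp_\gamma^+|$ and $\eta_\gamma \ge \Sigma_{\beta<\gamma} l(\beta)$ are precisely what is needed for both the ordinal identity $L+\mu = \mu$ and the small-forcing bound on $\mbq^+_{L+\mu}$ to hold at every inaccessible coding position $\mu$, so that the product structure of $\dot{\dot{Q}}_\mu$ survives the re-parametrization without any re-indexing of the $S_i$'s. Once that is established, $\mbq^+$ is a literal $\eta_0$-$\vec S$ coding, closing the induction.
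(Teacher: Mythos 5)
Your proposal is correct and follows essentially the same route as the paper: both flatten the nested iteration into a single countable support iteration indexed by the cumulative lengths $\Sigma_{\beta<\gamma}l(\beta)$, prove forcing equivalence by induction (limits via countable support limits, successors via concatenation), and isolate the same key point that at a non-proper coding stage $\mu\ge\eta_\gamma$ the absolute position satisfies $\Sigma_{\beta<\gamma}l(\beta)+\mu=\mu$ and the small-forcing bound transfers, so the product of $Club(S_i)$'s is unchanged. No substantive difference from the paper's argument.
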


\begin{proof}
Let $\epsilon_\alpha=\Sigma_{\beta<\alpha}l(\beta)$ for any $\alpha\le\delta$. We define a countable support iteration $\mbp'= \langle P_\alpha',\dot Q_\alpha' \mid \alpha <\epsilon_\delta \rangle$. By induction on $\alpha< \delta$, we give the definition of $P'_{\epsilon_\alpha}$. We will also verify that $P'_{\epsilon_\alpha}$ is forcing equivalent to $P_\alpha$ and $P'_{\epsilon_\alpha}$ is a $\eta_0$-$\vec S$ coding.

If $\alpha$ is limit. Then $P'_{\epsilon_\alpha}$ is the countable support limit of $P'_{\gamma}$ for $\gamma<\epsilon_\alpha$. Hence $P'_{\epsilon_\alpha}$ is the countable support limit of $P'_{\epsilon_\gamma}$ for $\gamma<\alpha$. As $P'_{\epsilon_\gamma}$ is forcing equivalent to $P_{\gamma}$. $P'_{\epsilon_\alpha}$ is forcing equivalent to the countable support limit of $P_{\gamma}$ for $\gamma<\alpha$. Hence $P'_{\epsilon_\gamma}$ is forcing equivalent to $P_{\alpha}$.

If $\alpha=\beta+1$. Let $\epsilon_\alpha=\epsilon_\beta+l(\alpha)$. Now $\dot{Q}_\beta$ is a name for a forcing iteration $\langle P^\alpha_\gamma,\dot{Q}^\alpha_\gamma \mid \gamma<l(\alpha) \rangle$. Now as $P_\beta$ is forcing equivalent to $P'_{\epsilon_\beta}$. We can view $\dot{Q}_\alpha$ as a $P'_{\epsilon_\beta}$ name. By induction on $\gamma< l(\alpha)$, we can set $\dot{Q}'_{\epsilon_\beta+\gamma}$ be $P'_{\epsilon_\beta+\gamma}$ name of $\dot Q^\alpha_\gamma$ and verify that $P'_{\epsilon_\beta+\gamma}$ is forcing equivalent to $P'_{\epsilon_\beta}*P^\alpha_\gamma$.

It remains to check that $P'_{\epsilon_\alpha}$ is a $\eta_0$-$\vec S$ coding. Fix a $\beta<\epsilon_\alpha$, with $\dot{Q}_\beta$ is not forced to be proper. There is $\gamma$ such that $\beta\in [\epsilon_\gamma, \epsilon_{\gamma+1})$. $\beta'$ is inaccessible cardinal greater then $\eta_ \gamma$. Let $\beta= \epsilon_\gamma+\beta'$. As $\epsilon_\gamma< \eta_\gamma$ and $\beta$ is inaccessible. $\beta=\beta'$ is inaccessible. Now $P'_{\beta}$ is forcing equivalent to $P_{\gamma}*P^\gamma_\beta$. $P_{\gamma}$ is of size less than $\beta$ and $P^\gamma_\beta$ is forced to be forcing equivalent to a forcing of size less of equal to $\beta$. Hence $P'_{\beta}$ is also forcing equivalent to a forcing of size less of equal to $\beta$. The rest of (2) is routine to check.
\end{proof}

We are mainly interested in executing $\vec S $-coding forcing over $L$. Now we can define the coding machinery to be used in the latter section.

\begin{definition}
    Suppose $\alpha$ is in inaccessible in $L$ and $X\subseteq P(\alpha)$. We say $\vec S$ codes $X \subset \alpha$ if
    \begin{enumerate}
        \item For any even $i\in [\alpha, (\alpha^+)^L)$, one of $S_{i}$ and $S_{i+1}$ contains a club.
        \item For any $x\in X$,      there is a $\beta< \alpha$ such that $$j\in x\text{ if and only if }S_{\alpha\cdot(\beta+1)+2i}\text{ contains a club}$$ and $$j\notin x\text{ if and only if }S_{\alpha\cdot(\beta+1)+2i+1}\text{ contains a club}$$
    \end{enumerate}
    Let $\vec C=\langle C_i \mid i\in [\alpha, (\alpha^+)^L) \rangle$ be a club sequence which witnesses (2), then we say that $\vec C$ is a $\vec S$ code for $X$.
\end{definition}

A useful fact is the upward absoluteness of the coding  between certain pair of  models.

\begin{lemma}\label{lem: upward}
    Suppose $N\subset M$ are two ZFC$^-$ transitive models. $N\models$ $\vec C$ is a $\vec S$ code for $X \subset \alpha$, $\alpha$ is inaccessible in $L^M$, $((\alpha^+)^L)^M= ((\alpha^+)^L)^N$. Suppose $M\models$ for any even $i\in [\alpha, (\alpha^+)^L)$, at most one of $S_{i}$ and $S_{i+1}$ contains a club. Then $M\models$ $\vec C$ is a $\vec S$ code for $X$.
\end{lemma}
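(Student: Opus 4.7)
I verify inside $M$ the two defining clauses of ``$\vec C$ is a $\vec S$-code for $X$''. Both reduce to a single absoluteness claim: for every even $i \in [\alpha,(\alpha^+)^L)$, the predicate ``$S_i$ contains a club'' agrees between $N$ and $M$. Granted this, clause (1) transfers via the $N$-witness $C_i \in \vec C$, and the biconditional in clause (2)---which refers only to this predicate---transfers with the same $\beta$ assigned to each $x \in X$ in $N$.

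The downward half of the absoluteness claim (no club in $N$ implies no club in $M$) is supplied directly by the lemma hypothesis: since in $N$ exactly one of $S_i, S_{i+1}$ has a club (say $S_i$, witnessed by $C_i$), once the upward half shows $S_i$ still has a club in $M$, the ``at most one'' hypothesis forces $S_{i+1}$ to have none in $M$. For the upward half, I argue that $C_i \in N \subseteq M$ remains an $\omega$-club in $M$. That $C_i \subseteq S_i$ and that $C_i$ is unbounded in $\eta_i$ are absolute facts about the fixed sets $C_i,S_i,\eta_i$. The only non-trivial point is $\omega$-closure.

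Suppose for contradiction that some $\gamma < \eta_i$ is an $M$-new $\omega$-limit of $C_i$ with $\gamma \notin C_i$. In $N$, $\omega$-closure of $C_i$ together with $\gamma \notin C_i$ forces $\cf^N(\gamma) > \omega$, so the problematic case is exactly that $M$ drops the cofinality of $\gamma$ to $\omega$. Here the hypothesis $((\alpha^+)^L)^M = ((\alpha^+)^L)^N$ enters: $N$ and $M$ share the same $L$-structure up through the $L$-successor of $\alpha$, and since the $\eta_i$'s, the $S_i$'s and the shot clubs $C_i$ are definable from $L$-parameters below this height, any $\omega$-cofinal sequence through such a $\gamma$ relevant to the closure of $C_i$ already belongs to $N$'s portion of $L$. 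This contradicts $\cf^N(\gamma)>\omega$.

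The main obstacle is making this last cofinality step precise, i.e.\ ruling out that passage to $M$ drops the cofinality of some $\gamma < \eta_i$ of interest to $\omega$. Everything else---the reduction of both clauses to a single absoluteness statement, the use of the ``at most one'' hypothesis for the downward direction, and the absoluteness of the subset relation and of unboundedness for the upward direction---is routine. The coding is engineered so that the ``contains a club'' predicate over the relevant index interval is determined by the $L$-structure up to $(\alpha^+)^L$, and this is exactly the piece that the hypothesis $((\alpha^+)^L)^M = ((\alpha^+)^L)^N$ freezes between $N$ and $M$.
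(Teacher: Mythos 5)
Your reduction of both clauses to the single statement that the predicate ``$S_i$ contains a club'' agrees between $N$ and $M$ for every relevant $i$ is exactly right, as is your observation that the ``at most one'' hypothesis in $M$ yields the downward direction for free once the upward direction is established; this matches the structure the paper intends (its own proof consists of the single remark that being a club is absolute between transitive models). The problem is the step you yourself flag as the main obstacle: your argument that $C_i$ remains closed under $\omega$-sequences in $M$ does not work. The danger is a limit point $\gamma$ of $C_i$ with $\gamma\notin C_i$, $\cf^N(\gamma)>\omega$ but $\cf^M(\gamma)=\omega$; the witnessing cofinal $\omega$-sequence lives in $M$ and need not lie in $L$ at all, so the hypothesis $((\alpha^+)^L)^M=((\alpha^+)^L)^N$ --- which only pins down the $L$-computation of $\alpha^+$, i.e.\ the index interval of the coding --- gives no control over it. The assertion that ``the shot clubs $C_i$ are definable from $L$-parameters below this height'' is also not available: the $C_i$ are added by forcing over (an extension of) $L$ and are certainly not in $L$. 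Indeed, for arbitrary transitive $\ZFP$ models the upward absoluteness of ``contains an $\omega$-club'' is genuinely false: take $N=L$, $C=\{\gamma<\omega_2^L : \cf^L(\gamma)=\omega\}$, and let $M$ be an outer model collapsing $\omega_1^L$; then $C$ is an $\omega$-club in $N$ but not in $M$.

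What rescues the lemma in its actual applications is that the pairs $(N,M)$ arising there agree about which ordinals below the relevant $\eta_i$ have countable cofinality: $M$ is obtained from $N$ by an $\vec S$-preserving countable support iteration, and the generic-condition argument gives the $\omega$-covering property, so no ordinal of uncountable $N$-cofinality acquires cofinality $\omega$ in $M$. With such a hypothesis added (or with attention restricted to the models actually used), your argument goes through verbatim; without it, the upward half is precisely where your proof --- and, implicitly, the paper's one-line version --- is incomplete.
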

\begin{proof}
    This follows from the definition. Note that being a club is an absolute property between transitive models.
\end{proof}
\begin{lemma}
    Suppose $\mbp$ is a $\vec S$-coding over $L$, $\alpha$ is inaccessible, $X\subseteq P(\alpha)$ and $S$ codes $X$, then $X=P(\alpha)^{L[G_\alpha]}$.
\end{lemma}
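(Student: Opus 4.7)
The plan is to unpack what $\vec S$-coding over $L$ guarantees at the inaccessible stage $\alpha$, and then match this up bijectively against the recipe in the definition of ``$\vec S$ codes $X$''. Write $G$ for the $\mbp$-generic and $G_\alpha=G\cap\mbp_\alpha$. By clause (1) in the definition of an $\vec S$-coding, since $\alpha$ is inaccessible with $|\mbp_\alpha|\le\alpha$, the factor $\dot Q_\alpha$ is (a countable support iteration of) the products $\prod_{j\in B_\beta}\mathrm{Club}(S_{\alpha(\beta+1)+2j})\times\prod_{j\notin B_\beta}\mathrm{Club}(S_{\alpha(\beta+1)+2j+1})$, running through an enumeration $\langle B_\beta:\beta<2^\alpha\rangle$ of $P(\alpha)^{L[G_\alpha]}$ given in $L[G_\alpha]$. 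Crucially, because $V=L$ and $|\mbp_\alpha|\le\alpha$, the computation of $P(\alpha)$ stabilises at $L[G_\alpha]$ in the sense that all subsets of $\alpha$ ever produced in the $\alpha$-th block come from $L[G_\alpha]$ (the clubs through the $S_i$ are sets of ordinals $\le\alpha$, but the pattern ``$j\in B_\beta$'' belongs already to $L[G_\alpha]$).

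For $P(\alpha)^{L[G_\alpha]}\subseteq X$: fix $B\in P(\alpha)^{L[G_\alpha]}$. Then $B=B_\beta$ for some $\beta<2^\alpha$ in the fixed enumeration, and the $\beta$-th coordinate of $\dot Q_\alpha$ literally shoots clubs through $S_{\alpha(\beta+1)+2j}$ for $j\in B$ and through $S_{\alpha(\beta+1)+2j+1}$ for $j\notin B$. These added clubs are unbounded and closed under $\omega$-sequences in $L[G_{\alpha+1}]$, and this property is absolute to the further extension $L[G]$: the tail $\mbp/G_{\alpha+1}$ is itself $\vec S$-preserving (Lemma \ref{lem: preservation}), so in particular it preserves cofinalities of the relevant $\eta_i$'s and therefore preserves unboundedness and $\omega$-closure of the shot clubs. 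Hence in $L[G]$ the required club witnesses are present, so $B$ is coded.

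For $X\subseteq P(\alpha)^{L[G_\alpha]}$: suppose $x\in X$ is witnessed, via Definition's clause (2), by some $\beta$ and the pattern of clubs at $S_{\alpha(\beta+1)+2j}$ versus $S_{\alpha(\beta+1)+2j+1}$. By Lemma \ref{lem: stationary preservation}, in any $\vec S$-coding extension $S_i$ can contain an $\omega$-club only if the iteration actually added a club there via clause (1). The indices $[\alpha,(\alpha^+)^L)$ used for coding at level $\alpha$ lie strictly below the next relevant inaccessible stage $\alpha'$ of $\mbp$ (since the tail stages either are proper — hence preserve the stationarity of every $S_i\cap\mathrm{Cof}(\omega)$ — or are codings $\dot Q_{\alpha'}$ whose active indices are $\ge\alpha'>(\alpha^+)^L$). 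So the only stage that could have produced the club witnesses for $x$ is $\dot Q_\alpha$ itself. But $\dot Q_\alpha$ generates exactly the patterns prescribed by the enumerated $B_\beta\in P(\alpha)^{L[G_\alpha]}$, so $x=B_\beta$ for the given $\beta$ and hence $x\in P(\alpha)^{L[G_\alpha]}$.

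The main obstacle I foresee is the bookkeeping in the reverse inclusion: one needs to be sure that no later non-proper stage of $\mbp$ can accidentally add a club through some $S_i$ with $i<(\alpha^+)^L$, and that no ``cross-contamination'' between distinct $\beta$'s within $\dot Q_\alpha$ occurs. Both are handled by Lemma \ref{lem: stationary preservation} together with the disjointness of the index blocks $[\alpha(\beta+1),\alpha(\beta+2))$, and by the fact that proper forcings preserve the stationarity of subsets of $\eta_i\cap\mathrm{Cof}(\omega)$, so the ``only one of $S_i,S_{i+1}$ contains a club'' dichotomy from the corollary to Lemma \ref{lem: stationary preservation} is maintained all the way to $L[G]$. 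Once that is established, Lemma \ref{lem: upward} can if needed be invoked to ensure the decoding performed inside $L[G]$ agrees with the one performed inside $L[G_{\alpha+1}]$.
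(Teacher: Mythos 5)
Your argument is correct and follows essentially the same route as the paper's (much terser) proof: the forward inclusion is the observation that the clubs shot at stage $\alpha$ survive to $L[G]$ (the paper packages this as Lemma \ref{lem: upward}), and the reverse inclusion is exactly the appeal to Lemma \ref{lem: stationary preservation}, which forces any witnessing club pattern in the index block $[\alpha,(\alpha^+)^L)$ to have been produced by $\dot Q_\alpha$ itself and hence to decode some $B_\beta\in P(\alpha)^{L[G_\alpha]}$. Your version simply spells out the bookkeeping that the paper leaves implicit.
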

\begin{proof}
    By Lemma \ref{lem: stationary preservation}, for any $i$, $S_i$ contains a club if there is some stage $\alpha$ such that $\dot{Q}_\alpha$ satisfies (2) in the definition. But $\dot{Q}_\alpha$ forces $S$ codes $P(\alpha)^{L[G_\alpha]}$ in $L[G_{\alpha+1}]$. Then by Lemma \ref{lem: upward},  $S$ codes $P(\alpha)^{L[G_\alpha]}$ in $L[G]$.
\end{proof}
\subsubsection{ A model of BPFA and $\Delta_1$-definalbility of $NS_{\omega_1}$}

\begin{theorem}
    Suppose that $V=L$ and $\delta$ is a reflecting cardinal. Then there is a forcing poset $P$ such that in $L^{P}$, the following statements hold:
    \begin{enumerate}
        \item BPFA
        \item $\omega_1=\omega_1^L$ and $\omega_2=\delta$.
        \item The nonstationary ideal on $\omega_1$ is $\Sigma_1(\{\omega_1\})$-definable over $\langle H(\omega_2), \in \rangle$.
    \end{enumerate}
\end{theorem}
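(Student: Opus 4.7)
My plan is to build $P$ as a length-$\delta$ countable support iteration over $L$, combining standard $\BPFA$ bookkeeping with $\vec S$-coding blocks, where $\vec S$ is a fixed mutually stationary sequence on $\{\eta \cap \Cof(\omega) : \eta < \delta \text{ regular}\}$ chosen to be definable from $\omega_1$ via the canonical $L$-order, so that no extra parameter is needed. A Laver-style function enumerates proper forcings of size $\leq \aleph_1$ appearing at intermediate stages, and at each sufficiently large inaccessible $\alpha$ for which $P_\alpha$ is equivalent to a forcing of size $\leq \alpha$, we insert a block as in clause~(1) of Definition~3.4, encoding the whole of $P(\alpha)^{L[G_\alpha]}$ into the corresponding tail of $\vec{S}$. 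By Lemma~\ref{lem: composition}, $P = P_\delta$ is then an $\omega_2$-$\vec S$-coding; by Lemma~\ref{lem: preservation} it is $\vec S$-preserving and hence $\omega_1$-preserving. Standard reflection arguments using $\delta$ reflecting yield $\delta$-cc, $\omega_2^{L^P} = \delta$, and $\BPFA$ in $L^P$ --- the non-proper coding blocks cause no harm because $\BPFA$ is a $\Sigma_1$-over-$H(\omega_2)$ soundness statement and the $\Sigma_1$-witnesses added at earlier proper stages are upward absolute through the later coding blocks.

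For clause~(3), I propose a $\Sigma_1(\{\omega_1\})$ formula $\varphi(S)$ asserting: \emph{there exist a countable transitive $M \models \ZFP$, an ordinal $\alpha$ inaccessible in $L^M$ with $(\alpha^+)^{L^M} = (\alpha^+)^L$, and $\vec C \in M$ such that $M$ believes $\vec{C}$ is an $\vec{S}$-code for some $X \subseteq P(\alpha)$ with $S \in X$, and $M$ certifies the appropriate stationarity status of $S$ through data that $\vec S$-preservation protects} (for example via a canonical $S_i \in \vec S$ with $S_i \subseteq S$ remaining stationary). In the forward direction, given $S \in P(\omega_1)^{L^P}$, the $\delta$-cc yields an $\alpha < \delta$ with $S \in L[G_\alpha]$, and one may pick $\alpha$ so that $P$ contains a coding block at $\alpha$ coding $P(\alpha)^{L[G_\alpha]}$; a sufficiently elementary countable substructure of $L[G_{\alpha+1}]$ containing $S$ collapses to the required $(M, \vec C)$. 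Conversely, Lemma~\ref{lem: upward} lifts $\vec C$ to a genuine $\vec S$-code in $L^P$, so the decoded set equals $S$, and $\vec{S}$-preservation of the tail $P/P_{\alpha+1}$ transfers the stationarity status from $M$ to $L^P$.

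The principal obstacle will be the converse direction: showing that the stationarity certified inside $M$ faithfully reflects stationarity in $L^P$. This requires arranging the coding so that the ``$S$ is stationary'' bit is expressed through properties protected by $\vec S$-preservation, i.e., by associating to each stationary $S$ a canonical $S_i \in \vec S$ which is forced to remain stationary, while ``$S$ is nonstationary'' is witnessed by a club added at some later stage (and is therefore already $\Sigma_1$-witnessed in $L^P$). A secondary difficulty is coordinating the two bookkeepings so that at every inaccessible $\alpha$ satisfying clause~(1) of Definition~3.4 there is room for the coding block without disrupting the $\BPFA$ enumeration; this should follow from $\delta$ reflecting via a standard Laver-diamond argument.
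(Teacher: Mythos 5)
Your overall architecture matches the paper's: a countable support iteration of length the reflecting $\delta$ over $L$, interleaving Goldstern--Shelah-style reflection at Mahlo stages with $\vec S$-coding blocks that write $P(\alpha)^{L[G_\alpha]}$ into the club filters of the $S_i$ at inaccessible stages $\alpha$ with $\vert P_\alpha\vert\le\alpha$, followed by a $\Sigma_1$ definition of stationarity via transitive models containing such a code. However, your verification of clause (3) has two genuine defects. First, the witnessing model $M$ cannot be countable: the code $\vec C$ is a sequence of length $(\alpha^+)^L$ of $\omega$-clubs in $L$-regular cardinals $\eta_i\ge\alpha$, and in $L^P$ each such $C_i$ (and $\alpha$ itself) has cardinality $\aleph_1$; moreover $M$ must contain $S\subseteq\omega_1$ itself and satisfy $\omega_1^M=\omega_1$ and $((\alpha^+)^L)^M=(\alpha^+)^L$. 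A countable elementary substructure of $L[G_{\alpha+1}]$ collapses $S$ to $S\cap\omega_1^M\neq S$ and collapses $(\alpha^+)^L$, so its transitive collapse cannot serve as a witness; what is needed is an $\aleph_1$-sized transitive $\ZFC^-$ model containing $\vec C$ and $S$ with the correct $\omega_1$.

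Second, and more importantly, the mechanism you propose for certifying stationarity --- associating to each stationary $S$ a ``canonical $S_i\in\vec S$ with $S_i\subseteq S$'' --- does not make sense and is not needed: the $S_i$ are subsets of regular cardinals $\eta_i\ge\alpha>\omega_1$, so they are not subsets of any $S\subseteq\omega_1$, and there is no reason an arbitrary stationary set of the final model should be tied to a ground-model member of $\vec S$. The correct argument is the content of Lemma~\ref{lem: stat des}: since the pattern on $\vec S$ uniquely determines the coded family (for even $i$ at most one of $S_i,S_{i+1}$ acquires a club) and decoding is upward absolute (Lemma~\ref{lem: upward}), any transitive $M$ containing a genuine code for $P(\alpha)^{L[G_\alpha]}$ must contain $P(\alpha)^{L[G_\alpha]}$ as a subset, hence all clubs of $\omega_1$ from $L[G_\alpha]$; so ``$M\models S$ is stationary'' already implies that $S$ is stationary in $L[G_\alpha]$, and the tail $P/P_\alpha$, being an $\alpha$-$\vec S_{>\alpha}$-coding forcing whose coding sets are concentrated above $\omega_1$, preserves this stationarity. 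No extra ``protected data'' is required. (A smaller point: your $\BPFA$ argument should apply the reflecting cardinal not to the proper poset $Q$ itself but to the composition $P/P_\alpha\ast Q$, which is an $\alpha$-$\vec S_{>\alpha}$-coding forcing adding the witness; this is how the non-proper coding blocks are absorbed.)
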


\begin{proof}
    We first choose a sequence $\langle S_ \alpha\mid \alpha\in \Lim(\delta)\rangle$ uniformly in $\alpha< \delta$ satisfying
    \begin{itemize}
        \item $S_\alpha\subset \alpha$
        \item If $\alpha$ is a regular cardinal, then $S_\alpha$ is stationary co-stationary in $\alpha\cap \Cof(\omega)$.
    \end{itemize}
     The existence of such a sequence $\langle S_ \alpha\mid \alpha\in \Lim(\delta)\rangle$  follows from the fact that $\diamondsuit_{\lambda}$ holds in $L$ for any $L$-cardinal $\lambda$ and is a routine construction. 
     
     Now we define the forcing poset $P$. The forcing $P=\langle P_\alpha, \dot{Q}_\alpha \mid \alpha<\delta \rangle $ will be a countable support iteration of length $\delta$. We require the size of each iterand to be smaller than $\delta$. As a consequence $P$ satisfies $\delta$-cc.
We demand that  $\dot{Q}_\alpha$ is trivial unless $\alpha$ is an inaccessible cardinal and $P_\alpha\subset L_\alpha$. We split into two cases if $\alpha$ is inaccessible.
     \begin{itemize}
        \item ($\alpha$ is Mahlo) We follow an idea from Goldstern-Shelah \cite{GS}.
        Choose a $A\subset \omega_1$ and a $\Sigma_1$ formula $\psi(x,y)$. $A$ and $\psi$ is chosen in a bookkeeping way so that during the whole iteration, each pair $(A,\psi)$ will be dealt with unboundedly many times. Since $\delta$ is reflecting, whenever there is a $\alpha$-$\vec S_{>\alpha}$ coding forcing poset $Q$ which adds a witness to $\psi(x,A)$, there is such a $Q$ of size less than $\delta$. In this case, let $\dot{Q}_\alpha$ be the name of $Q$ and use this forcing at stage $\alpha$. When there is no such $Q$, we set $\dot{Q}_\alpha$ to be trivial forcing.
        \item ($\alpha$ is inaccessible but not Mahlo) $\dot{Q}_\alpha$ is trivial unless $\vert P_\alpha\vert\le \alpha$. Work in $L[G_\alpha]$, where $G_\alpha$ is a $L$-$P_\alpha$ generic filter. $\vert P_\alpha\vert= \alpha$ is inaccessible. We choose $\langle B_\beta\mid \beta <2^\alpha\rangle$ to be an enumeration of $P(\alpha)^{V[G_\alpha]}$. Let $Q$ be the forcing $$\prod_{i\in B_\beta}Club(S_{\alpha\cdot(\beta+1)+2i}) \times \prod_{i\notin B_\beta}Club(S_{\alpha\cdot(\beta+1)+2i+1})$$ with countable support. Then we force with $Q$ at stage $\alpha$.
     \end{itemize}
     It follows from the definition of $P$ that for any $\alpha <\delta$, the forcing $\dot{Q}_{\alpha}$ is forced to be a $\alpha-\vec S$ coding forcing. Applying Lemma \ref{lem: composition}, we know that $P$ is a $\vec{S}$-coding forcing. Moreover, for any $\alpha$ Mahlo, the tail $P /P_\alpha$ is a $\alpha$-$\vec S_{>\alpha}$ coding.

     Now as $P$ is $\vec S$-preserving, $P$ preserves $\omega_1$. On the other hand, $P$ is $\delta$-c.c and $P$ preserves cardinals above $\delta$. By the definition of the forcing in the inaccessible but not Mahlo case, all cardinals below $\delta$ are collapsed to $\omega_1$. In summary, $\omega_1^{L^P}=\omega_1^L$ and $\omega_2^{L^P}=\delta$.

     \begin{lemma}
        $P \Vdash $ $\BPFA$.
     \end{lemma}

     \begin{proof}
        Work in $L[G]$. Let $A\subset \omega_1$ and $Q \in L[G]$ be a proper forcing which adds a witness to the $\Sigma_1$-formula $\psi(x,A)$. Now let $\alpha$ be a stage such that $A\in L[G_\alpha]$, $\alpha$ is Mahlo and $( A,\psi)$ is to be dealt with at stage $\alpha$. In $L[G]$, the forcing $P/P_\alpha*Q$ is a $\alpha-\vec S_{>\alpha}$ coding forcing adding a witness to $\psi(x,A)$. Hence, we must be in the first case of the definition of our iteration at stage $\alpha$ and $Q_\alpha$ is a $\alpha-\vec S_{>\alpha}$ coding forcing which adds a witness to $\psi(x,A)$. Thus $H(\omega_2)^{L[G_{\alpha+1}]}\models \exists x\psi(x, A)$. By upward absoluteness, $H(\omega_2)^{L[G]}\models \exists x\psi(x, A)$.
     \end{proof}
     \begin{lemma}\label{lem: stat des}
      Work in $L[G]$.  Let $S$ be a subset of $\omega_1$, then the following are equivalent:
        \begin{itemize}
            \item[(a)] $S$ is stationary.
            \item[(b)] there is a $\alpha$ inaccessible in $L$ with $P_\alpha\subseteq L_\alpha$ and $S$ is stationary in $L[G_\alpha]$.
            \item[(c)] there is a $\alpha$ inaccessible in $L$, there is $\vec{C} \in L[G_{\alpha+1}]$ which is a $\vec S$ code for  $P(\alpha)^{L[G_\alpha]}$, and there is a transitive model $M$ of $\ZFP$ such that
            $\vec{C} \in M$ and $M$ thinks that $S$ is stationary.
        \end{itemize}

     \end{lemma}
     \begin{proof}
         ((a) $\rightarrow$ (c)) Let $\alpha$ be inaccessible but not Mahlo and consider stage $\alpha$ of the iteration which we can assume to be  such that $S\in L[G_\alpha]$ and $\vert P_\alpha\vert\le \alpha$. Now $Q_\alpha$ forces the existence of a $\vec{C}$-sequence which is an $\vec S$ code for $P(\alpha)^{L[G_\alpha]}$.   Any transitive $M$ which contains $\vec{C}$ is as desired in (c), moreover $M$ will automatically think that $S$ is stationary if $S$ is stationary in $L[G]$.

         ((b) $\rightarrow$ (a)) The tail of forcing $P/P_\alpha$ is $\alpha$-$S_{>\alpha}$ coding. Now the proof of Lemma \ref{lem: stationary preservation} shows $P/P_\alpha$ preserves stationarity of $S$.

         ((c) $\rightarrow$ (b)) If $M$ is as in the assumption, $P(\alpha)^{L[G_{\alpha}]}$ is a subset of $M$, and if $M$ thinks that $S$ is stationary, so must $V_{\alpha+1}^{L[G_{\alpha}]}=P(\alpha)^{L[G_{\alpha}]}$, and hence $L[G_{\alpha}]$.
     \end{proof}

     We now present the $\Sigma_1$ definition of stationary subsets of $\omega_1$ over $(H(\omega_2), \in, \{\omega_1\})$. Let $\psi(x)$ describe the following statement: there are objects $A$ and $M$ such that
     \begin{itemize}
         \item[(i)] $A= \langle C_i \mid i
         \in [\alpha, \beta) \rangle$ is a sequence with $C_i$ is an $\omega$-club in $\sup (C_i \cap Ord)=\eta_i$.
         \item[(ii)] All $\eta_i$ are regular cardinals in $L$. $\eta_\alpha=\alpha$ is inaccessible in $L$.
         \item[(iii)] $M$ is a transitive ZFC$^-$ model with $\omega_1=\omega_1^M$, $((\alpha^+)^L)^M= (\alpha^+)^L$, and $x\in M$.
         \item[(iv)] $M\models x \subset \omega_1$ is stationary.
         \item[(v)] $M\models A$ is a $\vec S$ codes for $P(\alpha)$.
     \end{itemize}
     It is routine to check (i), (iii), (iv) and (v) are all $\Sigma_1$ over $\langle H(\omega_2),\in,  \{\omega_1\} \rangle$. For (ii), as $L[G]\models BPFA$, we can apply a trick of Todorcevic (cf. Proof of Lemma 4, \cite{To02}) to get $\Sigma_1$ formulas $\psi_0 (x)$ and $\psi_1 (x)$ such that $H(\omega_2)\models \psi_0(\beta)$ if and only if $\beta$ is a regular cardinal in $L$ and $H(\omega_2)\models \psi_1(\beta)$ if and only if $\beta$ is an inaccessible cardinal in $L$. 
     
 Here $\psi_0(x)$ is the formula describing the existence of a specialization function of the tree $T_x$, where $T_x$ is derived from the canonical global square sequence in $L$. It is a consequence of $\BPFA$ that $x$ is uncountable regular in $L$ if and only if such a specialization function exists. Now $\psi_1(x)$ is a formula says $\alpha$ is regular and a limit of regular cardinals in $L$.

    It is now clear that $\psi(x)$ is $\Sigma_1$ over $\langle H(\omega_2),\in, \{\omega_1\} \rangle.$ What is left is to show that $\psi(x)$ indeed characterizes stationarity.

     \begin{lemma}
         In $L[G]$, for any $x\subseteq \omega_1$, $H(\omega_2)\models\psi(x)$ if and only if $x$ is stationary.
     \end{lemma}
     \begin{proof}
       If $x\subset \omega_1$ is stationary, then, by Lemma \ref{lem: stat des} (c), we do immediately get a witness $M \in H(\omega_2)$ and an $A$ so that $\psi(x)$ holds.
        
        Conversely, let $x$, $A$, $M$ be as given by the definition of $\psi(x)$, we check that $x$ is stationary. But if  $\psi(x)$ holds, then $M$ thinks that $A$ is a $\vec{S}$-code for $P(\alpha)$. By item  $(ii)$ of $\psi$, this also means that $A$ is a $\vec{S}$-code for $P(\alpha)$ and by (c) of Lemma \ref{lem: stat des} it is true that $x$ is indeed stationary. 
     \end{proof}

\end{proof}

\subsection{A model for $\MA$, $\NS$ being saturated and $\Delta_1(\{\omega_1\})$-definable}

In this section we improve an earlier result of \cite{Ho} namely
we show that given a Woodin cardinal, there is a model such that $\NS$ is saturated, $\Delta_1$-definable with parameters from $H(\omega_2)$ while Martin's Axiom also holds true. If one forces over the canonical inner model with one Woodin cardinal $M_1$, then the construction yields a model where additionally $\NS$ is definable with $\omega_1$ as the only parameter. 

\subsubsection{Short summary of the main features of the model $W_1$}
The proof relies heavily on the coding machinery introduced in \cite{Ho}, where it is shown that, given a Woodin cardinal $\delta$, then there is a universe such that $\NS$ is saturated and $\Delta_1(\vec{C}, \vec{T}^0)$-definable, where $\vec{C}$ is an arbitrary ladder system on $\omega_1$ and $\vec{T}^0$ is an independent sequence (independence will be defined in a moment below) of Suslin trees of length $\omega$.
As this coding is rather convoluted, we will not define it here in detail but instead only highlight the most important notions and features of it. Our notation will be exactly as there.

We shall use Suslin trees on $\omega_1$ for creating a $\bf{\Sigma_1}$-definition of stationarity. To facilitate things, the trees should satisfy a certain property:
\begin{definition}
 Let $\vec{T} = (T_{\alpha} \, : \, \alpha < \kappa)$ be a sequence of Suslin trees. We say that the sequence is an 
independent family of Suslin trees if for every finite set $e= \{e_0, e_1,...,e_n\} \subset \kappa$ the product $T_{e_0} \times T_{e_1} \times \cdot \cdot \cdot \times T_{e_n}$ 
 is a Suslin tree again.
\end{definition}

Independent sequences of Suslin trees can be used to code arbitrary information using the two well-known and mutually exclusive ways to destroy a Suslin tree, namely either shooting a branch through the tree or specializing it.
More precisely, given a set $X \subset \omega_1$ and an independent sequence of Suslin trees $\vec{T}=(T_i \, : \, i < \omega_1)$, we can code the characteristic function of $X$ into $\vec{T}$ via forcing with the finitely supported product of 
\begin{equation*}
\forceP_i= \begin{cases}
T_i  & \text{ if } i \in X \\ Sp(T_i) & \text{ if } i \notin X
\end{cases}
\end{equation*}
where $T_i$ just denotes the forcing one obtains when forcing with the Suslin tree $T_i$ (which adds a cofinal branch to $T_i$) and $Sp(T_i)$ denotes the forcing which spezializes the tree $T_i$.
Note that the independence of $\vec{T}$ buys us that the finitely supported product has the ccc as well. We will eventually use this mechanism to create a generic extension of $V$ with a $\Sigma_1(\vec{C},\vec{T}^0)$-definition of being stationary on $\omega_1$.

The construction of such a universe shall be sketched now.
We start with a universe $V$ with one Woodin cardinal $\delta$ with $\diamondsuit$. We fix a ladder system $\vec{C}$ and an $\omega$-sequence of independent Suslin trees $\vec{T}^0$ and start a first, nicely supported iteration (using Miyamoto's nice iterations, see \cite{Mi}) of length $\delta$ over $V$ which combines Shelah's proof of the saturation of $\NS$ from a Woodin cardinal with the forcings invented by A. Caicedo and B. Velickovic (see \cite{CV}) and other forcings whose purpose is to create a model $W_0$ which will have several features listed below which will turn our to be useful.  

In a next step we force over $W_0$ with a variant of almost disjoint coding which is due to L. Harrington (see \cite{Ha}), which is used to ensure that over the resulting generic extension of $W_0$, denoted by $W_1$, there is a ${\Sigma_1}(\vec{C},\vec{T}^0)$-definable $\omega_2$-sequence of $\aleph_1$-sized, transitive models which are sufficiently smart to determine whether a member is a stationary subset of $\omega_1$ or a Suslin tree in $W_0$. These sufficiently smart models are called suitable, and the purpose of suitable models is that they can correctly compute a fixed, independent sequence $\vec{T}=(T_{\alpha} \, : \, \alpha < \omega_2)$ of Suslin trees. Due to this correctness, we can identify whether an $\omega_1$-block of trees is in $\vec{T}$ in a $\Sigma_1(\vec{C},\vec{T}^0)$-way. The sequence $\vec{T}$ will be used later to code up being stationary in a $\Sigma_1(\vec{T}^0, \vec{C})$-way.  

To summarize the above, starting from an arbitrary $V$ which contains a Woodin cardinal $\delta$ with $\diamondsuit$, and fixing a ladder system on $\omega_1$ $\vec{C}$ and an $\omega$-sequence of independent Suslin trees $\vec{T}^0$, we create first a generic extension $W_0$ and then a further generic extension $W_1$ such that in $W_1$ the following holds:

\begin{enumerate}
\item $\delta=\aleph_2$.
\item In $W_1$, the nonstationary ideal is saturated and the saturation is ccc-indestructible.
\item Every real in $W_0$ is coded by a triple of limit ordinals $(\alpha, \beta, \gamma)$ below $\omega_2$ relative to the ladder system $\vec {C}$ (in the sense of Caicedo-Velickovic, see \cite{Ho}, pp. Theorem 18 $(\ddagger)$).
\item Every subset $X \subset \omega_1$, $X \in W_0$ is coded by a real $r_X \in W_0$ relative to the fixed almost disjoint family of reals $F$ we recursively obtain from our ladder system $\vec{C}$.
\item There is an $\omega_2$-sequence of independent Suslin trees $\vec{T} = (T_i \, : \, \omega < i < \omega_2) \in W_1$ whose initial segments are uniformly and correctly definable in suitable models. The set of suitable models is itself $\Sigma_1(\vec{C}, \vec{T}^0)$-definable in $W_1$ using as parameters the ladder system $\vec{C}$, and one $\omega$-block of independent Suslin tree $\vec{T}^0=(T_n \, : \, n \in \omega)$.
As a consequence, $\vec{T}$ is $\Sigma_1(\vec{C},\vec{T}^0)$-definable over $W_1$.

\item The definition of $\vec{T}$ remains valid in all generic extensions of $W_1$ by forcings with the countable chain condition. So $W_1$ is a reasonable candidate for a ground model using coding forcings which themselves have the ccc. 
\end{enumerate}

In a second iteration using $W_1$ as the ground model,  we force with coding forcings using Suslin trees that are applied to make $\NS$ $\Sigma_1(\vec{C},\vec{T}^0)$-definable. 
We force with a finitely supported iteration of ccc forcings over $W_1$. As a consequence we preserve the saturation of the nonstationay ideal and the sequence $\vec{T}$ is still $\Sigma_1(\vec{C}, \vec{T}^0)$-definable. The only forcings which are used in this second iteration are the Suslin trees from our independent sequence $\vec{T}^{>0}=(T_i \, : \, i > \omega)$, which we either specialize or destroy via the addition of an $\omega_1$-branch. We will use a bookkeeping function and start to write characteristic functions of every stationary subset of $\omega_1$ into $\omega_1$-blocks of $\vec{T}^{>0}$ using either the spezialization forcing or shooting a branch through elements of $\vec{T}^{>0}$.

This will eventually yield a universe $W_{\omega_2}$ where the nonstationary ideal remains saturated and where stationary subsets of $\omega_1$ can be characterized as follows:

\begin{fact}
There are $\Sigma_1(\vec{C}, \vec{T}^0)$-formulas $\Phi(r)$ and $\Psi(S)$ where the formula $\Phi(r)$ defines a set of reals such that every member is an almost disjoint code for an $\aleph_1$-sized, transitive models which can be used to compute the sequence $\vec{T}^{>0}$ of Suslin trees correctly (these models are the suitable models mentioned earlier). The formula $\Psi(S)$ then defines stationary subsets of $W_{\omega_2}$ in the following way:
\begin{itemize}
\item[$\Psi(S)$] if and only if there is an $\aleph_1$-sized, transitive model $N$ which contains $\vec{C}$ and $\vec{T}^0$ such that $N$ models that
\begin{itemize}
\item There exists a real $x$ such that $\Phi(x)$ holds, i.e. $x$ is a code for a suitable model $M$.
\item There exists an ordinal $\alpha$ in the suitable model $M$ such that $\vec{T}'$ is the $\alpha$-th $\omega_1$ block of the definable sequence of independent Suslin trees as computed in $M$ and $N$ sees a full pattern on $\vec{T}'$.
\item $\forall \beta < \omega_1$ $( \beta \in S$ if and only if $\vec{T}' (\beta)$ has a branch).
\item $\forall \beta < \omega_1$ $(\beta \notin S$ if and only if $\vec{T}' (\beta)$ is special).
\end{itemize}
\end{itemize}
Note that $\Psi(S)$ is of the form $\exists N (N \models...)$, thus $\Psi$ is a $\Sigma_1$-formula.
\end{fact}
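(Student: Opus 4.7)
The plan is to verify in $W_{\omega_2}$ both directions of $\Psi(S) \Leftrightarrow S$ is stationary, after spelling out the intended meaning of the two $\Sigma_1$-formulas. The formula $\Phi(r)$ is inherited from $W_1$: it asserts that $r$ is an almost disjoint code, relative to the family $F$ recursively derived from $\vec{C}$, of an $\aleph_1$-sized transitive model $M$ which, using $\vec{T}^0$ and $\vec{C}$ as parameters, correctly computes an initial segment of the independent $\omega_2$-sequence $\vec{T}^{>0}$ of Suslin trees. Since the iteration from $W_1$ to $W_{\omega_2}$ is a finite support ccc iteration whose non-trivial iterands come from $\vec{T}^{>0}$ or from the associated specialization forcings, suitability is preserved and the $\Sigma_1(\vec{C},\vec{T}^0)$-definition of $\vec{T}^{>0}$ survives into $W_{\omega_2}$; hence $\Phi$ continues to pick out the correct class of codes.

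For completeness, suppose $S \subseteq \omega_1$ is stationary in $W_{\omega_2}$. The bookkeeping underlying the second iteration is arranged so that every stationary set of $W_{\omega_2}$ is handled at some stage $\alpha < \omega_2$; at that stage the characteristic function of $S$ is written into the $\alpha$-th $\omega_1$-block $\vec{T}'$ of $\vec{T}^{>0}$ by forcing with $T'_\beta$ if $\beta \in S$ and with $Sp(T'_\beta)$ if $\beta \notin S$. Fix a suitable model $M$ containing $\alpha$, guaranteed by item (5) of the summary, together with an almost disjoint code $r$ of $M$, and let $N$ be the transitive collapse of an $\aleph_1$-sized elementary submodel of $H(\theta)^{W_{\omega_2}}$ containing $\vec{C}, \vec{T}^0, S, \vec{T}', M, r$. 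Then $\Phi(r)$ holds in $N$ by upward $\Sigma_1$-absoluteness, $M$ computes $\vec{T}'$ as its $\alpha$-th block, and $N$ records exactly the pattern of branches and specialization functions actually present on $\vec{T}'$ in $W_{\omega_2}$; so $\Psi(S)$ is witnessed.

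For soundness, suppose $N, r, M, \alpha, \vec{T}'$ witness $\Psi(S)$. By upward $\Sigma_1$-absoluteness, $\Phi(r)$ holds in $W_{\omega_2}$, so $M$ is genuinely suitable and its $\alpha$-th block coincides with the $\alpha$-th $\omega_1$-block of $\vec{T}^{>0}$ computed in $W_{\omega_2}$. For each $\beta < \omega_1$, the statements ``$T'_\beta$ has a cofinal branch'' and ``$T'_\beta$ carries a specialization function'' are both $\Sigma^1_1$, hence upward absolute from $N$ to $W_{\omega_2}$. In $W_{\omega_2}$ the two are mutually exclusive since a specialized Suslin tree admits no uncountable chain, and by construction of the iteration exactly one of them holds for each $T'_\beta$; therefore the pattern recorded by $N$ on $\vec{T}'$ must coincide with the true pattern in $W_{\omega_2}$. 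But that true pattern is precisely the characteristic function of the stationary set coded at stage $\alpha$, and by the two pattern clauses of $\Psi(S)$ it equals the characteristic function of $S$, so $S$ is stationary.

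The main obstacle will be cleanly justifying the pattern-absoluteness step: namely, that the $\omega_1$-blocks of $\vec{T}^{>0}$ really remain independent sequences of Suslin trees all the way through the second iteration, so that ``has a branch'' and ``is special'' are mutually exclusive in $W_{\omega_2}$, and that the only mechanism by which any $T'_\beta$ can acquire one of these two properties is the bookkeeping step. The first point reduces to the ccc preservation of independence (together with the $W_1$ machinery that makes the trees in $\vec{T}^{>0}$ survive as Suslin throughout), while the second follows from the fact that in the second iteration the only forcings used are shooting and specialization forcings applied to trees of $\vec{T}^{>0}$, so no spurious pattern can arise. Once these preservation facts are isolated, the soundness and completeness arguments above go through as sketched.
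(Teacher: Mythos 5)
Your proposal is correct and takes essentially the approach the paper intends: the paper states this Fact without proof, deferring to the construction of \cite{Ho}, and your two-directional argument --- bookkeeping plus a large enough transitive $N$ for completeness, and for soundness the upward absoluteness of ``has a branch''/``is special'' combined with their mutual exclusivity and the preservation of Suslinity for trees not yet consumed by the coding --- is exactly the pattern-coincidence argument the paper itself spells out later for the analogous model $W_2[I_{\omega_2}]$. One small point to tighten: that $\Phi(r)$ holds \emph{in} $N$ is not a matter of upward absoluteness but of arranging that the $\Sigma_1$-witnesses (the suitable model $M$ and the data decoding $r$) are themselves placed inside $N$.
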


\subsubsection{Forcing over $W_1$}
We shall show how to modify the just sketched construction to obtain a model where additionally $\MA$ holds. As before we will construct the model $W_1$. We will proceed however not coding up stationary subsets of $\omega_1$ as we do in \cite{Ho} but instead code up a second $\omega_2$-block of generically added Suslin trees first.

In an $\omega_2$-length iteration we first use
 Tennenbaum's forcing over $W_1$ to add an independent sequence of Suslin trees of length $\omega_2$ and use our definable independent sequence $\vec{T}$ to code up the added Suslin trees. First let us briefly recall the definition of Tennenbaum's forcing.
\begin{definition}
Tennenbaum's forcing $\forceP_T$ consists of conditions which are finite trees $(T,<_T)$, $T \subset \omega_1$ such that $\alpha < \beta$ if $\alpha <_T \beta$, and $(T_1, <_{T_1}) < (T_2,<_{T_2})$ holds if $T_2 \subset T_1$ and $<_{T_2} = <_{T_1} \cap (T_2 \times T_2)$.
\end{definition}
It is well-known that $\forceP_T$ is Knaster and adds generically a Suslin tree to the ground model.

So we start with the model $W_1$ as our ground model. Let $\vec{C}$ be our fixed ladder system on $\omega_1$ and let $\vec{T}^0$ be a fixed independent sequence of Suslin trees of length $\omega$. In $W_1$ there is a $\Sigma_1(\vec{C}, \vec{T}^0)$-definable $\omega_2$-sequence of $\omega_1$-blocks of independent Suslin trees $\vec{T}=(\vec{T}^{\alpha} \, : \, \alpha < \omega_2)$, where for every $0 \ne \alpha < \omega_2$, $\vec{T}^{\alpha}=(T_{\eta}^{\alpha} \, : \, \eta < \omega_1)$, and $\vec{T}$ forms an independent sequence of Suslin trees.

Over $W_1$ we start a finitely supported iteration $\forceQ= ((\forceQ_{\alpha},\dot{\forceR}_{\alpha}) \, : \, \alpha < \omega_2)$ and let $H_{\alpha}$ denote the generic filter for $\forceQ_{\alpha}$. For every $\alpha< \omega_2$, using $W_1[G_{\alpha}]$ as the ground model, $\dot{\forceR}^{G_{\alpha}}_{\alpha}$ is defined to be $\forceQ^1_{\alpha} \ast \forceQ^2_{\alpha}$, and $\forceQ^1_{\alpha}$ is Tennenbaum's $\forceP_T$ and $\forceQ_{\alpha}^2$ codes up the generically by $\forceQ_{\alpha}^1$ added tree, called $h_{\alpha} \subset \omega_1$   in the following way:

\begin{equation*}
\forceQ_{\alpha}^2 = \begin{cases}
T^{\alpha}_{\eta}  & \text{ if } \eta \in h_{\alpha} \\ Sp(T^{\alpha}_{\eta}) & \text{ if } \eta \notin h_{\alpha}
\end{cases}
\end{equation*}

It is immediate that the resulting universe $W_2= W_1[H_{\omega_2}]$ is a ccc extension of $W_1$, thus $\NS$ remains saturated and as in \cite{H} one can show that the generically added sequence of Suslin trees $(h_{\alpha} \, : \, \alpha < \omega_2)$ is an independent, $\Sigma_1(\vec{C}, \vec{T}^0)$-definable sequence of Suslin trees via the formula:

\begin{fact}
There is a $\Sigma_1(\vec{C}, \vec{T}^0)$-formula $\Psi(h)$ which defines the generically added Suslin trees $(h_{\alpha} \, : \, \alpha < \omega_2)$ of $W_2$:
\begin{itemize}
\item[$\Psi(h)$] if and only if there is an $\aleph_1$-sized, transitive model $N$ which contains $\vec{C}$ and $\vec{T}^0$ such that $N$ models that
\begin{itemize}
\item There exists a real $x$ such that $\Phi(x)$ holds, i.e. $x$ is a code for a suitable model $M$.
\item There exists an ordinal $\alpha$ in the suitable model $M$ such that $\vec{T}'$ is the $\alpha$-th $\omega_1$ block of the definable sequence of independent Suslin trees as computed in $M$ and $N$ sees a full pattern on $\vec{T}'$.
\item $\forall \beta < \omega_1$ $( \beta \in h$ if and only if $\vec{T}' (\beta)$ has a branch).
\item $\forall \beta < \omega_1$ $(\beta \notin h$ if and only if $\vec{T}' (\beta)$ is special).
\end{itemize}
\end{itemize}
Note that $\Psi(h)$ is of the form $\exists N (N \models...)$, thus $\Psi$ is a $\Sigma_1$-formula.
\end{fact}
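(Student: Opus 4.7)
My plan is to prove both directions of the biconditional directly from the construction of the iteration $\forceQ$, using the coding pattern installed at each stage and the preservation properties inherited from $W_1$.

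For the forward direction, fix $\alpha<\omega_2$ and I will produce a witness $N$ for $\Psi(h_\alpha)$. By definition of $\dot{\forceR}_\alpha = \forceQ^1_\alpha \ast \forceQ^2_\alpha$, in $W_1[H_{\alpha+1}]$ the block $\vec{T}^\alpha = (T^\alpha_\eta \, : \, \eta < \omega_1)$ carries the full pattern coding $h_\alpha$: for $\eta \in h_\alpha$ there is a cofinal branch $b^\alpha_\eta$ through $T^\alpha_\eta$, and for $\eta \notin h_\alpha$ there is a specialization function $f^\alpha_\eta : T^\alpha_\eta \to \omega$. The tail of the iteration from stage $\alpha+1$ only forces with trees belonging to blocks $\vec{T}^{\alpha'}$ with $\alpha' \ne \alpha$, so by the independence of $\vec{T}$ these branches and specializations are preserved into $W_2$. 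Now use that $\vec{T}$ is $\Sigma_1(\vec{C}, \vec{T}^0)$-definable in $W_1$, a property which persists to $W_2$ by property (6), to pick a real $x$ coding a suitable model $M$ with $\alpha \in M$; by correctness of suitable models, $M$'s $\alpha$-th block agrees with $\vec{T}^\alpha$. Form a Skolem hull of a large $H(\theta)^{W_2}$ containing $\vec{C}$, $\vec{T}^0$, $x$, $h_\alpha$, $\vec{T}^\alpha$, and the family $\{b^\alpha_\eta, f^\alpha_\eta : \eta < \omega_1\}$, and take its transitive collapse $N$. Then $N$ is transitive of size $\aleph_1$ and verifies each clause of $\Psi(h_\alpha)$.

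For the converse, assume $\Psi(h)$ holds in $W_2$, witnessed by $N$, a real $x$ for a suitable $M$, an ordinal $\alpha \in M$, and a block $\vec{T}'$ which $M$ computes as the $\alpha$-th $\omega_1$-block. Correctness of suitable models (property 5 of $W_1$), transported to $W_2$ by property (6), gives $\vec{T}' = \vec{T}^\alpha$ in $W_2$. The last two clauses of $\Psi$ say that $N$ contains, for every $\eta < \omega_1$, either a cofinal branch through $T^\alpha_\eta$ (when $\eta \in h$) or a specialization of $T^\alpha_\eta$ (when $\eta \notin h$); both data are upward absolute to $W_2$. Since a tree of height $\omega_1$ cannot simultaneously carry a cofinal branch and be special, this pattern in $W_2$ uniquely determines $h$. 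But the preservation argument from the forward direction shows that the pattern on $\vec{T}^\alpha$ present in $W_2$ is exactly the pattern that was installed at stage $\alpha+1$, namely the one coding $h_\alpha$. Hence $h = h_\alpha$.

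The main obstacle is the pattern-preservation argument. It rests on two points which must be used carefully: first, that a cofinal branch and a specialization are mutually exclusive on any single tree, so later stages of the finite-support ccc iteration cannot corrupt the $\alpha$-th pattern; and second, that the $\omega_2$-sequence $\vec{T}$ of independent Suslin trees retains its $\Sigma_1(\vec{C}, \vec{T}^0)$-definition after the ccc extension from $W_1$ to $W_2$, which is precisely the content of property (6) of $W_1$. Given these inputs, the argument is a straightforward transcription of the proof of the analogous Fact from \cite{Ho} for the sequence $\vec{T}^{>0}$, now applied to the new sequence $(h_\alpha \, : \, \alpha < \omega_2)$ of generically added Suslin trees.
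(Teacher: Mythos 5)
Your argument is correct and matches the intended proof: the paper gives no argument for this Fact beyond deferring to the analogous statement in \cite{Ho}, and your reconstruction --- install the pattern on $\vec{T}^{\alpha}$ at stage $\alpha$, note that cofinal branches and specialization functions are upward absolute and mutually exclusive on a single tree of height $\omega_1$, and use the suitable-model machinery (properties (5) and (6) of $W_1$) to read the pattern off inside an $\aleph_1$-sized transitive witness $N$ --- is exactly that argument. One minor remark: the preservation of the installed branches and specializations into $W_2$ is pure upward absoluteness and does not require the independence of $\vec{T}$; independence is only needed to keep the coding products ccc and to keep the not-yet-used trees Suslin, which is what makes the converse direction's appeal to mutual exclusivity decisive.
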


In a second step, we use $W_2$ as our ground model and force in an $\omega_2$-length, finitely supported iteration $\MA$ while simultaneously coding up stationary subsets of $\omega_1$ using the boldface $\Sigma_1$-definable sequence of trees $(h_{\alpha} \, : \, \alpha < \omega_2)$. We write $\vec{h}^{\alpha}$ for the $\alpha$-th $\omega_1$-block of elements of the sequence $(h_{\beta} \, : \, \beta < \omega_2)$ and let $h^{\alpha}_{\beta}$ denote the $\beta$-th element of the $\alpha$-th $\omega_1$-block.

We do the usual forcing to code characteristic functions of stationary subsets of $\omega_1$ into $\vec{h}^{\alpha}$, but additionally we feed in forcings of size $\aleph_1$ with the countable chain condition to produce a model of $\MA$. Note that as all the forcings we use have the ccc and thus we will preserve the saturation of $\NS$.

In order to prevent the forcings we use to get $\MA$ from damaging our codes, we will force $\MA$ in a ``diagonal way$"$. 
We define a finitely supported iteration $((\forceR_{\alpha}, \dot{\mathbb{S}}_{\alpha} ) \, : \, \alpha < \omega_2)$ of ccc forcing over $W_2$ inductively using a bookkeeping function $F$. We let $F \in W_2$, $F: \omega_2 \rightarrow \omega_2 \times \omega_2 \times 2$ such that for every $(\alpha, \beta, i) \in \omega_2 \times \omega_2 \times 2$, $F^{-1} (\alpha, \beta, i)$ is an unbounded subset in $\omega_2 \times \omega_2 \times 2$.  Assume we are at stage $\alpha < \omega_2$ and we have already defined the iteration $\forceR_{\alpha}$ up to $\alpha< \omega_2$.  We let $I_{\alpha}$ denote the generic filter for $\forceR_{\alpha}$. We also assume by induction that every of the first $\alpha$-many $\omega_1$-blocks of trees $\vec{h}^{\beta}$, $\beta < \alpha$ has already been used for codings, but the elements of the sequences $(\vec{h}^{\eta} \, : \eta \ge \alpha)$ still form an independent sequence of Suslin trees in $W_2[I_{\alpha}]$. The forcing we have to use next is determined by the value of $F(\alpha)$. 
\begin{enumerate}
\item If $F(\alpha)=(\beta, \gamma, 0)$, then we look at the $\beta$-th stationary subset $S$ of $\omega_1$ in $W_2[I_{\gamma}]$ and use the $\alpha$-th $\omega_1$-block of $\vec{h}$, $\vec{h}^{\alpha}=(h^{\alpha}_{\eta} \, : \, \eta < \omega_1)$ to code up $S$ i.e. we will force with
$\dot{\forceS}^{I_{\alpha}}_{\alpha} := \prod_{i < \omega_1} \forceP_i$ with finite support, where
\begin{equation*}
\forceP_i = \begin{cases}
h^{\alpha}_{\eta}  & \text{ if } \eta \in S \\ Sp(h^{\alpha}_{\eta}) & \text{ if } \eta \notin S
\end{cases}
\end{equation*}
where $h^{\alpha}_{\eta}$ here is considered as a forcing notion when forcing with the tree and $Sp(h^{\alpha}_{\eta})$ denotes the specialization forcing for the tree $h^{\alpha}_{\eta}$.
\item If $F(\alpha)=(\beta, \gamma, 1)$, then we look at the $\beta$-th forcing $\mathbb{B}$ of size $\aleph_1$ in $W_2[I_{\gamma}]$ which has the countable chain condition as seen in the universe $W_2[I_{\alpha}]$. We can consider the iteration $((\forceR_{\zeta} \, : \, \zeta < \alpha) \ast \mathbb{B})$ which has a dense subforcing of size $\aleph_1$ (the dense set is just the set of conditions in $\forceR_{\alpha} \ast \mathbb{B}$ which are fully decided), ccc forcing in $W_2$ and can thus be seen as a subset of $\omega_1$ in $W_2$. As $W_2=W_1[H_{\omega_2}]$, there is a stage $\nu < \omega_2$ such that (a forcing equialent to) $((\forceR_{\zeta} \, : \, \zeta < \alpha) \ast \mathbb{B}) \in W_1[H_{\nu}]$. Now if $\nu \le \alpha$ we let $\dot{\forceS}^{I_{\alpha}}_{\alpha}$ be $\mathbb{B}$. Otherwise we force with the trivial forcing.
\end{enumerate}

In the first case of the definiton we will write codes in the sequence $(\vec{h}^{\alpha} \, : \, \alpha < \omega_2)$ of blocks of Suslin trees. The definition of the second case ensures that we will not accidentally write an unwanted pattern when forcing for $\MA$.
\begin{lemma}
Assume we are at stage $\alpha$ of our iteration and we are in the nontrivial part of case 2 of the definition, thus we force with an $\aleph_1$-sized $\dot{\forceS}^{I_{\alpha}}_{\alpha}=\mathbb{B}$. Then, if $I_{\alpha+1}$ is $\forceR_{\alpha+1}=\forceR_{\alpha} \ast \dot{\forceS}_{\alpha}$-generic over $W[H_{\omega_2}]$, all the Suslin trees in $\vec{h}_{\zeta}$, $\zeta \ge \alpha$ remain Suslin trees in $W[H_{\omega_2}][ I_{\alpha+1}]$.
\end{lemma}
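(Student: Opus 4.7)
The strategy is to exploit the factorization $W_2 = W_1[H_{\omega_2}]$ and rearrange the forcings so that $\mathbb{B}' := \forceR_\alpha \ast \mathbb{B}$ appears as a ground-model forcing \emph{before} the Tennenbaum trees in $\vec{h}^\zeta$ (for $\zeta \ge \alpha$) are added. By the nontriviality hypothesis in case (2), there is some $\nu \le \alpha$ such that $\mathbb{B}'$ is, up to forcing equivalence, an element of $W_1[H_\nu]$; moreover, $\mathbb{B}'$ is ccc of size $\aleph_1$ there. Writing $W_2 = W_1[H_\nu][H_{[\nu,\omega_2)}]$, where $H_{[\nu,\omega_2)}$ is the generic of the tail iteration $\forceQ^{\text{tail}} := \forceQ / H_\nu$, I would show that
\[ W_2[I_{\alpha+1}] \;=\; W_1[H_\nu][H_{[\nu, \omega_2)}][\mathbb{B}'] \;=\; W_1[H_\nu][\mathbb{B}'][H_{[\nu, \omega_2)}]. \]
Since $\forceQ^{\text{tail}}$ is a finitely supported iteration of Knaster forcings (Tennenbaum's $\forceP_T$, forcing with Suslin trees from $\vec{T}$, and the specialization forcings), it is itself Knaster in $W_1[H_\nu]$; combined with the ccc-ness of $\mathbb{B}'$ there, the product $\mathbb{B}' \times \forceQ^{\text{tail}}$ is ccc in $W_1[H_\nu]$ (Knaster $\times$ ccc yields ccc). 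The standard mutual-genericity argument then yields the asserted rearrangement.

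With this rearrangement in hand, I would observe that for $\zeta \ge \alpha$, every tree $h_\beta$ in the block $\vec{h}^\zeta$ is added by $\forceQ^{\text{tail}}$ at some stage $\beta \ge \omega_1 \cdot \alpha \ge \nu$. Hence, in the reorganised extension, $h_\beta$ is a generic Tennenbaum tree added over the intermediate model $W_1[H_\nu][\mathbb{B}'][H_{[\nu, \beta)}]$, and the usual Tennenbaum-forcing argument shows $h_\beta$ is Suslin in $W_1[H_\nu][\mathbb{B}'][H_{[\nu, \beta+1)}]$. The remaining stages of $\forceQ^{\text{tail}}$ are again Knaster and orthogonal to $h_\beta$, since they either add distinct Tennenbaum trees or specialize/shoot branches through trees of the $\vec{T}$-sequence (which lies in $W_1$ and is independent from $\vec{h}$). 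By the same product argument used to establish independence of $(h_\gamma : \gamma < \omega_2)$ in the original construction of $W_2$, the tree $h_\beta$ remains Suslin all the way to $W_2[I_{\alpha+1}]$.

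The principal obstacle I anticipate is making the commutativity of $\mathbb{B}'$ with $\forceQ^{\text{tail}}$ precise, since $\forceQ^{\text{tail}}$ is an iteration whose iterands are names depending on the ongoing generic. The crucial observation is that $\mathbb{B}' \in W_1[H_\nu]$ leaves the restriction $H_\nu$ of the generic unchanged, so each iterand of $\forceQ^{\text{tail}}$ has the same interpretation over $W_1[H_\nu]$ and over $W_1[H_\nu][\mathbb{B}']$; combined with the preservation of ccc under Knaster perturbations, this yields the desired rearrangement by a standard product-forcing argument, from which the preservation of Suslinity follows as above.
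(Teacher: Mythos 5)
Your proposal follows essentially the same route as the paper's proof: factor $W_2=W_1[H_{\nu}][H_{[\nu,\omega_2)}]$, absorb $\forceR_{\alpha}\ast\dot{\forceS}_{\alpha}$ into a single forcing $\mathbb{B}'$ lying in $W_1[H_{\nu}]$, commute it past the tail so that the Tennenbaum generics are added over $W_1[H_{\nu}][\mathbb{B}']$, and then rerun the Suslinity/independence argument from the construction of $W_2$ over this enlarged ground model. One inaccuracy in your write-up: the tail is not a finitely supported iteration of \emph{Knaster} forcings, because forcing with a Suslin tree is ccc but never Knaster (an uncountable set of pairwise compatible nodes would be an uncountable chain, i.e.\ a cofinal branch), so ``Knaster $\times$ ccc is ccc'' is not available in the form you state it. This does no real harm: the rearrangement you need is just the product lemma applied to the two $W_1[H_{\nu}]$-forcings $\mathbb{B}'$ and $\forceQ^{\mathrm{tail}}$, and mutual genericity there requires no chain condition at all --- which is exactly how the paper argues (``we can switch the order in products''). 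Chain conditions only enter afterwards, when you invoke the independence argument of \cite{Ho} over the new ground model to see that each $h_{\beta}$ stays Suslin through the remaining stages of the tail, and there the relevant products are ccc for the same reasons as in the original construction of $W_2$.
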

\begin{proof}
Assume that we are at stage $\alpha$, thus the model we have produced so far is $W_1[H_{\omega_2}][ I_{\alpha}]$ and we force with $\dot{\forceS}_{\alpha}$ which is a ccc forcing of size $\aleph_1$ in $W_1[H_{\nu}][I_{\alpha}]$ for $\nu \le \alpha$. Consider some block $\vec{h}^{\zeta}$, $\zeta \ge \alpha$ in the universe $W_1[H_{\omega_2}][\forceR_{\alpha} \ast \dot{\forceS}_{\alpha}]$. The latter universe is obtained via
the iteration 
$$(\forceQ_{\zeta} \, : \, \zeta < \omega_2) \ast \forceR_{\alpha} \ast \dot{\forceS}_{\alpha}= (\forceQ_{\zeta} \, : \, \zeta \le \nu) \ast  (\forceQ_{\zeta} \slash \forceQ_{\nu} \, : \zeta> \nu) \ast \forceR_{\alpha} \ast \dot{\forceS}_{\alpha} $$ 
and the right hand side can be rewritten as $$(\forceQ_{\zeta} \, : \, \zeta \le \nu)  \ast  ((\forceQ_{\zeta} \slash \forceQ_{\nu} \, : \zeta> \nu) \times (\forceR_{\alpha} \ast \dot{\forceS}_{\alpha} )).$$ As we can switch the order in products, the latter can be written as $$(\forceQ_{\zeta} \, : \, \zeta \le \nu)  \ast  ((\forceR_{\alpha} \ast \dot{\forceS}_{\alpha} )\times (\forceQ_{\zeta} \slash \forceQ_{\nu} \, : \zeta> \nu)),$$ and consequentially the $W_1[H_{\nu}]$-generic filter $H_{\nu, \omega_2}$ for the tail $(\forceQ_{\zeta} \slash \forceQ_{\nu} \, : \, \zeta> \nu, \zeta < \omega_2)$ remains generic over the model $W_1[(\forceQ_{\zeta} \, : \, \zeta \le \nu)  \ast  (\forceR_{\alpha} \ast \dot{\forceS}_{\alpha})]$.  This means in particular that the generically added Suslin trees $\vec{h}^{\zeta}, \zeta > \nu$ for Tennenbaum's forcing for adding a Suslin tree which are elements in $H_{\nu, \omega_2}$ remain generic even over the ground model $W_1[H_{\nu}  \ast  I_{\alpha+1}]$. Now Tennebaum's forcing is computed in an absolute way in every universe with the same $\omega_1$, and trivially, every generic filter for it is a Suslin tree. Hence we obtain that every $h^{\zeta}_{\eta}$, $\zeta > \nu, \eta< \omega_1$ is a Suslin tree in $W_1[H_{\nu} \ast  I_{\alpha+1} ]$, thus every $h^{\zeta}_{\eta}$, $\zeta \ge \alpha \ge \nu$ is a Suslin tree in $W_1[H_{\omega_2} \ast I_{\alpha+1}]$ as claimed.
\end{proof}
After $\omega_2$-many steps we arrive at $W_2[I_{\omega_2}]$ which has the desired properties. The first thing to note is that we are in full control of the codes which are written into the sequences of blocks of Suslin trees $(\vec{h}^{\alpha} \, : \, \alpha < \omega_2)$.
\begin{lemma}
In $W_2[I_{\omega_2}]$, a set $S \subset \omega_1$ is stationary if and only if there is an $\alpha < \omega_2$ such that 
\begin{itemize}
\item[] $\forall \beta < \omega_1 (( \beta \in S \leftrightarrow h^{\alpha}_{\beta}$ has a branch $)$ and $(\beta \notin S \leftrightarrow h^{\alpha}_{\beta}$ is special $))$.
\end{itemize}
\end{lemma}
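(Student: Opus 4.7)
The plan is to prove both directions by combining three ingredients: the bookkeeping function $F$, the ccc of the iteration $\forceR_{\omega_2}$ over $W_2$, and the upward absoluteness of ``having a cofinal branch'' and ``being special'' for an $\omega_1$-tree.

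For $(\Rightarrow)$, assume $S\subset\omega_1$ is stationary in $W_2[I_{\omega_2}]$. Since $\forceR_{\omega_2}$ is a finite support iteration of ccc forcings it is itself ccc over $W_2$, so there is some $\gamma<\omega_2$ with $S\in W_2[I_\gamma]$, and $S$ is already stationary in $W_2[I_\gamma]$ (ccc forcing preserves stationarity). Let $\beta$ be the index of $S$ in the distinguished enumeration of stationary subsets of $\omega_1$ in $W_2[I_\gamma]$. The bookkeeping $F$ guarantees a stage $\alpha>\gamma$ with $F(\alpha)=(\beta,\gamma,0)$, at which we are in Case 1 and force with the product $\dot{\forceS}_\alpha^{I_\alpha}$ that shoots a branch through $h^\alpha_\eta$ for every $\eta\in S$ and specializes $h^\alpha_\eta$ for every $\eta\notin S$. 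Once added, a cofinal branch remains a cofinal branch and a specialization function remains specializing in any further extension, so the pattern is visible in $W_2[I_{\omega_2}]$, as required.

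For $(\Leftarrow)$, suppose conversely that some $\alpha<\omega_2$ displays a full pattern on $\vec{h}^\alpha$ in $W_2[I_{\omega_2}]$. In $W_2$ every tree in $\vec{h}^\alpha$ is Suslin, so it has no cofinal branch and admits no specialization (the latter would decompose an uncountable tree into countably many countable antichains). Hence the pattern must have been written at some stage $\alpha'$ of $\forceR_{\omega_2}$. If $\alpha'\ne\alpha$ falls under Case 1, then $\dot{\forceS}_{\alpha'}$ only touches the disjoint block $\vec{h}^{\alpha'}$. If $\alpha'$ falls under Case 2, the preceding lemma shows that $\dot{\forceS}_{\alpha'}$ preserves every $h^\zeta_\eta$ with $\zeta\geq\alpha'$ as a Suslin tree, and in particular preserves $\vec{h}^\alpha$ unbranched and unspecialized. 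Thus $\alpha'=\alpha$ and Case 1 applies at $\alpha$ with some $F(\alpha)=(\beta,\gamma,0)$. The set that gets coded by this step is, by construction, the $\beta$-th stationary subset of $\omega_1$ in $W_2[I_\gamma]$, hence stationary there, and therefore stationary in $W_2[I_{\omega_2}]$ by the ccc of the remaining iteration. Since the pattern on $\vec{h}^\alpha$ determines this set uniquely, it equals $S$.

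The main obstacle is the freezing of patterns. Stages prior to $\alpha$ must not damage the trees in $\vec{h}^\alpha$, and stages after $\alpha$ must not overwrite the coded pattern. Both points reduce to the preceding lemma together with the upward absoluteness of the pattern predicates, plus the inductive maintenance of independence of the tail $(\vec{h}^\eta:\eta\geq\alpha)$ up to stage $\alpha$, which is what makes the Case 1 product ccc and lets it simultaneously branch and specialize the individual trees as demanded by $S$.
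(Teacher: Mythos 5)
Your proof is correct and follows essentially the same route as the paper's (much terser) argument: the forward direction via the bookkeeping and upward absoluteness of branches and specializations, and the backward direction via the preceding lemma showing that only a Case~1 stage acting on the block $\vec{h}^{\alpha}$ itself can produce a full pattern there, so that any full pattern codes a genuinely stationary set. Your version simply makes explicit the details (ccc catching $S$ at a bounded stage, independence protecting disjoint blocks) that the paper leaves implicit.
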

\begin{proof}
If $S$ is stationary then the rules of the iteration guarantee that there is such an $\alpha < \omega_2$ with the desired properties. On the other hand if there is an $\alpha < \omega_2$ such that the $\alpha$-th block $\vec{h}^{\alpha}$ sees a certain 0,1-pattern then by the last Lemma, this pattern must come from the first case in the definition of our iteration. Hence $S$ has to be stationary.
\end{proof}

\begin{theorem}
In $W_2[I_{\omega_2}]$ $\MA$ holds, $\NS$ is saturated and $\Delta_1(\vec{C}, \vec{T}^0)$-definable.
\end{theorem}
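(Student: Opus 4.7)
The plan is to verify the three conclusions in turn, invoking the features of $W_1$ listed earlier and the bookkeeping built into the iteration $\forceR=\forceR_{\omega_2}$.

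For Martin's Axiom in $W_2[I_{\omega_2}]$, let $\mathbb{B}$ be a ccc forcing of size $\aleph_1$ with dense sets $\{D_\xi:\xi<\omega_1\}$. Since finite support iterations of ccc forcings are ccc, $W_2[I_{\omega_2}]$ is obtained from $W_2$ by a ccc forcing, so there exists $\gamma<\omega_2$ with $\mathbb{B}$ and the $D_\xi$'s in $W_2[I_\gamma]$. Because any antichain of $\mathbb{B}$ computed in $W_2[I_\alpha]$ for $\gamma\le\alpha\le\omega_2$ remains an antichain in $W_2[I_{\omega_2}]$, $\mathbb{B}$ stays ccc at every intermediate stage. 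Let $\beta$ be the index of $\mathbb{B}$ in the enumeration used in $W_2[I_\gamma]$. By the choice of $F$, the preimage $F^{-1}(\beta,\gamma,1)$ is unbounded in $\omega_2$; pick $\alpha\in F^{-1}(\beta,\gamma,1)$ large enough that the stage $\nu$ associated with $\forceR_\alpha\ast\mathbb{B}$ in the first iteration satisfies $\nu\le\alpha$. Then at stage $\alpha$ we force with $\mathbb{B}$, yielding a generic filter meeting all the $D_\xi$. Since $\mathbb{B}$ and the $D_\xi$'s were arbitrary, $\MA$ holds.

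The saturation of $\NS$ in $W_2[I_{\omega_2}]$ follows from feature (2) of $W_1$: the saturation of $\NS$ in $W_1$ is ccc-indestructible. Both iterations (the one producing $W_2$ from $W_1$ and the one producing $W_2[I_{\omega_2}]$ from $W_2$) are finite support iterations of ccc forcings, hence ccc, and their two-step composition is likewise ccc. Thus $W_2[I_{\omega_2}]$ is a ccc generic extension of $W_1$, and the saturation transfers by the ccc-indestructibility assumption.

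For the $\Delta_1(\vec{C},\vec{T}^0)$-definability of $\NS$, the second Fact of the preceding subsection gives a $\Sigma_1(\vec{C},\vec{T}^0)$-formula $\Psi(h)$ characterizing the generically added Suslin trees $(h_\alpha:\alpha<\omega_2)$, and this characterization persists in $W_2[I_{\omega_2}]$ because the second iteration is ccc and preserves the relevant suitable models as well as the code-sequence $\vec{T}$. By the just-stated lemma, a set $S\subset\omega_1$ is stationary in $W_2[I_{\omega_2}]$ if and only if some $\omega_1$-block $\vec{h}^\alpha$ witnesses the characteristic pattern of $S$; spelling this out inside an $\aleph_1$-sized transitive model which locates $\vec{h}^\alpha$ via $\Psi$ yields a $\Sigma_1(\vec{C},\vec{T}^0)$-definition of stationarity. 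Non-stationarity is plainly $\Sigma_1$ (the existence of a disjoint club), giving $\Delta_1$-definability of $\NS$. The main obstacle is to ensure that the ccc forcings introduced to obtain $\MA$ do not accidentally inscribe false codes into some $\vec{h}^\alpha$; this is exactly what the preceding lemma rules out, since each $h^\zeta_\eta$ for $\zeta\geq\alpha$ remains Suslin at every stage, so no stray branch or specialization appears outside the controlled coding steps of case (1).
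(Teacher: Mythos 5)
Your proposal is correct and follows essentially the same route as the paper: $\MA$ via the bookkeeping function catching every ccc poset of size $\aleph_1$ at a sufficiently late stage, saturation via ccc-indestructibility over $W_1$, and $\Delta_1$-definability by combining the $\Sigma_1(\vec{C},\vec{T}^0)$-definition of the $h_\alpha$'s with the lemma that the patterns on the blocks $\vec{h}^\alpha$ exactly characterize stationarity (the preceding lemma ruling out stray codes). The only cosmetic difference is that the paper packages the $\Sigma_1$ definition explicitly as an existential statement about a chain of three $\aleph_1$-sized transitive models $M_1\subset M_2\subset M_3$, which you leave implicit.
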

\begin{proof}
That $\NS$ is saturated is clear as $W_2[I_{\omega_2}]$ is a ccc extension of $W_0$ and $\NS$ is saturated in $W_0$ and ccc indestructible. The proof that $\MA$ holds in $W_2[I_{\omega_2}]$ is also clear as a standard computation yields that the continuum is $\aleph_2$ in $W_2[I_{\omega_2}]$. Hence, it is sufficient to show that $\MA$ holds for ccc posets of size $\aleph_1$. Let $\forceP \in W_2[I_{\omega_2}]$ be such, then there is a stage $\nu < \omega_2$ such that $\forceP \in W_2[I_{\nu}]$. The rules of the iteration yield that we will consider $\forceP$ unboundedly often after stage $\nu$. Thus there will be a stage $\alpha< \omega_2$ such that $\forceP$ is considered by the bookkeeping $F$ and $\forceR_{\alpha} \ast \forceP$  is an element of $W_1[H_{\nu}]$ for $\nu \le \alpha$, and hence we used $\forceP$ in the iteration $(\forceR_{\eta} \, : \, \eta < \omega_2)$, so $\MA$ holds. 

In order to see that every $S \in W_2[I_{\omega_2}]$ has a $\Sigma_1(\vec{C}, \vec{T}^0)$-definition we exploit the fact that the trees $(h_{\alpha} \, : \, \alpha < \omega_2)$ are $\Sigma_1(\vec{C}, \vec{T}^0)$-definable in $W_1[I_{\omega_2}]$. We claim that the following $\Sigma_1(\vec{C}, \vec{T}^0)$-formula $\varphi(S)$ defines being stationary in $W_2[I_{\omega_2}]$:
\begin{itemize}
\item[] $\varphi(S)$ if and only if there exists a triple of $(M_1, M_2, M_3)$ of transitive models of size $\aleph_1$ such that $M_1 \subset M_2 \subset M_3$, $M_1$ is a suitable model and $M_2$ sees a full pattern on the trees in some $\omega_1$-block $\vec{T}^{\alpha}$. This pattern itself yields an $\omega_1$-block of trees $\vec{h}^{\beta}$ and $M_3$ sees a full pattern on $\vec{h}^{\beta}$ and this pattern is the characteristic function for $S$.
\end{itemize}
Note that the formula $\varphi(S)$ is of the form $\exists M_1,M_2,M_3 \,\sigma(M_1,M_2,M_3,S)$ and $\sigma$ is $\Delta_1$ as all the statements in $\sigma$ are of the form $M_i \models...$ which is a $\Delta_1$-formula.

By absoluteness and the way we defined our iteration, it is clear that if $S \subset \omega_1$ is stationary in $W_2[I_{\omega_2}]$, then $\varphi(S)$ holds. 

On the other hand, if $\varphi(S)$ is true and $M_1,M_2$ and $M_3$ are witnesses to the truth of $\varphi(S)$, then, as they see  full patterns, their local patterns must coincide with the patterns in the real world $W_2[I_{\omega_2}]$. But the last Lemma ensures that the patterns of $W_2[I_{\omega_2}]$ characterize stationarity, so the proof is finished.
\end{proof}
As in \cite{Ho}, instead of working in an arbitrary $V$ with a Woodin cardinal, we can work over the canonical inner model with one Woodin cardinal $M_1$. This has the advantage, that we can replace the two parameters $\vec{C}$ and $\vec{T}^0$ by just $\{ \omega_1 \}$. We will not go into any details and just claim that the above proof can be applied over $M_1$, with all the modifications exactly as in \cite{Ho}. We therefore obtain the last theorem of this article.
\begin{theorem}
Let $M_1$ be the canonical inner model with one Woodin cardinal. Then there is a generic extension of $M_1$, in which $\NS$ is saturated, $\Sigma_1(\omega_1)$-definable, and $\MA$ holds.
\end{theorem}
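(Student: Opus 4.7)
The plan is to replay the two-step construction of the preceding theorem verbatim but with $V=M_1$ instead of an arbitrary universe containing a Woodin cardinal, and to absorb the auxiliary parameters $\vec{C}$ and $\vec{T}^0$ into the single parameter $\omega_1$ by exploiting the canonical character of $M_1$. Since $M_1$ does contain a Woodin cardinal $\delta$ and does satisfy $\diamondsuit$ at $\omega_1$ (indeed at every regular uncountable cardinal), the assumptions of the preceding construction are met, so the first iteration producing $W_1$, the second iteration producing $W_2$ from $W_1$ via Tennenbaum plus coding into $\vec{T}$, and the final finitely supported ccc iteration producing $W_2[I_{\omega_2}]$ go through without change. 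Saturation of $\NS$, ccc-indestructibility of that saturation, and the forcing of $\MA$ by the diagonal bookkeeping are preserved as before; what has to be reworked is only the complexity of the definition of stationarity.

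Following \cite{Ho}, my next step is to take for $\vec{C}$ and $\vec{T}^0$ the canonical ladder system and the canonical $\omega$-block of independent Suslin trees read off from the fine-structural $\diamondsuit$-sequence of $M_1$. Because these objects are $M_1$-internally definable from $\omega_1$ in a level-by-level manner, and because the ccc iteration preserves $\omega_1^{M_1}$ and adds no reals affecting the $M_1$-structure below $\omega_1$, both $\vec{C}$ and $\vec{T}^0$ become $\Sigma_1(\omega_1)$-definable in every intermediate model. In particular, the predicate $\Phi(x)$ of the previous subsection, which asserts that the real $x$ is an almost-disjoint code for a suitable model, gets replaced by a predicate $\Phi^\ast(x)$ which asks for a suitable model that moreover correctly identifies itself as iterable above an $M_1$-like initial segment; by the usual $\Sigma^1_3$-correctness and iterability characterizations available for $M_1$, this new $\Phi^\ast$ is $\Sigma_1(\omega_1)$ over $W_2[I_{\omega_2}]$.

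With $\Phi^\ast$ in hand, the formula $\varphi(S)$ characterizing stationarity --- ``there exist transitive $\aleph_1$-sized $M_1\subseteq M_2\subseteq M_3$ with $M_1$ suitable, $M_2$ seeing a full pattern on some $\omega_1$-block $\vec{T}^{\alpha}$, that pattern naming a block $\vec{h}^{\beta}$, and $M_3$ seeing on $\vec{h}^{\beta}$ precisely the characteristic function of $S$'' --- is now $\Sigma_1(\omega_1)$ rather than $\Sigma_1(\vec{C},\vec{T}^0)$. The correctness arguments of the previous theorem (upward absoluteness of ``this pattern appears on such-and-such block'' for ccc forcing, and the diagonal bookkeeping ensuring no spurious pattern is ever written) are unaffected, so $\varphi$ defines exactly the stationary sets of $W_2[I_{\omega_2}]$.

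The main obstacle is this translation of the suitability predicate: one must verify that the $M_1$-mouse operator, restricted to the reals coding countable suitable models, admits a $\Sigma_1(\omega_1)$ formulation that survives the coding iteration. This is where the choice $V=M_1$ is essential, because then $Q$-structures for countable premice at $\omega_1$ can be located inside $M_1|\omega_1^{M_1}$, giving the required $\Sigma_1$-in-$\omega_1$ identification; once this is established, the rest of the proof is a direct transcription of the preceding theorem and of the corresponding argument in \cite{Ho}, yielding a generic extension of $M_1$ satisfying $\MA$ together with $\NS$ being saturated and $\Sigma_1(\omega_1)$-definable, as required.
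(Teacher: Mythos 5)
Your proposal matches the paper's own (very brief) treatment: the paper simply asserts that the preceding construction can be run over $M_1$ with the parameters $\vec{C}$ and $\vec{T}^0$ replaced by $\omega_1$ via the modifications of \cite{Ho}, which is exactly the strategy you carry out. Your additional detail on reading the canonical ladder system and Suslin trees off the fine-structural $\diamondsuit$-sequence and on the $\Sigma_1(\omega_1)$-identification of suitable models fills in steps the paper deliberately omits, and is consistent with the cited construction.
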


\subsubsection{Open questions}
We end with a couple of natural problems which remained open.
\begin{question}
Assume $\PFA$. Is there a $\Sigma_1$-formula and a set $A \subset \omega_1$ such that 
\[\forall S\in P(\omega_1) (S \text{ is stationary } \Leftrightarrow \varphi(S,A) )? \]
\end{question}

\begin{question}
Assume the existence of a Woodin cardinal. Is there a universe in which
$\BPFA$ holds, $\NS$ is $\bf{\Delta_1}$-definable over $H(\omega_2)$ and $\NS$ is saturated?
\end{question}

\begin{question}
Assume the existence of a reflecting cardinal. Is $\BPFA$ consistent with the non-existence of a $\Sigma_1(\omega_1)$-definition of $\NS$?

\end{question}


\begin{thebibliography}{12}

\bibitem{CV} A. Caicedo and B. Velickovic 
{\em The bounded proper forcing axiom and wellorderings of the reals.} 
Mathematical Research Letters, 13 (2-3) (2006). pp. 393-408

\bibitem{FM} M. Foreman and M. Magidor \textit{ Mutually stationary sequences of sets and the non-saturation of the non-stationary ideal on $P_{\kappa}(\lambda)$}. Acta Math. 186 (2001), no. 2, 271–300.

\bibitem{GS} M. Goldstern and S. Shelah \textit{ The bounded proper forcing axiom}. The Journal of Symbolic Logic, Volume 60, Number 1. March 1995. pp.58–73.

\bibitem{Ha} L. Harrington {\em Long projective wellorderings.} Ann. Math. Logic, 12(1):1–24, 1977.    
    
    
\bibitem{Ho} S. Hoffelner \textit{$\hbox{NS}_{\omega_1}$ $\Delta_1$-definable and saturated.}  Journal of Symbolic Logic 86 (1),  2021, pp. 25 - 59.

\bibitem{La1} P.B. Larson \textit{Forcing over Models of Determinacy.}
Handbook of Set Theory. Springer, 2010. 

\bibitem{La2} P.B. Larson \textit{The stationary tower. Notes on a course by W. Hugh Woodin.} American Mathematical Society University Lecture Series, vol 32, 2004.

\bibitem{LLS} P. L\"ucke, R. Schindler and P. Schlicht {\em $\Sigma_1 (\kappa)$-definable subsets of $H(\kappa^+)$}. Journal of Symbolic Logic, Volume 82, Issue 3, September 2017, pp. 1106-1131.

\bibitem{Mi} T. Miyamoto {\em On Iterating Semiproper Preorders.} Journal of Symbolic Logic 67, pp. 1431-1468, 2002.

\bibitem{RX} R. Schindler and X. Sun {\em $\MM$ and the definability of $\NS$}. Preprint 2022.

\bibitem{S} S. Shelah \textit{ Proper and improper forcing}. Second edition. Perspectives in Mathematical Logic. Springer-Verlag, Berlin, 1998. xlviii+1020 pp

\bibitem{Ta} A. Taylor \textit{Regularity properties of ideals and ultrafilters}. Annals of Mathematical Logic, 16, pp.33-55, 1979.

\bibitem{To02} S.Todorcevic \textit{Localized reflection and fragments of PFA}, Set theory (Piscataway, NJ, 1999), volume 58 of DIMACS series in discrete mathematics and theoretical computer science, pp. 135–148. American Mathematics Society, Providence, RI (2002)

\bibitem{Wo} H. W. Woodin {\em The Axiom of Determinacy, Forcing Axioms, and the Nonstationary Ideal.} De Gruyter, 2001.
\end{thebibliography}
\end{document}